\def\ds@whichfont{dsrom}
\DeclareMathAlphabet{\mathds}{U}{\ds@whichfont}{m}{n}
\newtheorem{theorem}{Theorem}[section]
\newtheorem{lemma}[theorem]{Lemma}
\newtheorem{corollary}[theorem]{Corollary}
\newtheorem{example}[theorem]{Example}
\theoremstyle{definition}
\newtheorem{remark}[theorem]{Remark}
\numberwithin{equation}{section}
\theoremstyle{plain}
\numberwithin{equation}{section} %% Comment out for sequentially-numbered
\numberwithin{figure}{section} %% Comment out for sequentially-numbered
\theoremstyle{plain}
\theoremstyle{plain}
\theoremstyle{remark}
\newtheorem*{acknowledgement*}{Acknowledgement}
\DeclareMathOperator{\Ima}{Im}
\newcommand{\cA}{{\mathcal A}}
\newcommand{\bbN}{{\mathbb N}}
\newcommand{\bbI}{{\mathbb I}}
\begin{document}
\title[A general approach to nonautonomous shadowing]{A general approach to nonautonomous shadowing for nonlinear dynamics}

\author{Lucas Backes}
\address{\noindent Departamento de Matem\'atica, Universidade Federal do Rio Grande do Sul, Av. Bento Gon\c{c}alves 9500, CEP 91509-900, Porto Alegre, RS, Brazil.}
\email{lucas.backes@ufrgs.br}

\author{Davor Dragi\v cevi\' c}
\address{Department of Mathematics, University of Rijeka, Croatia}
\email[Davor Dragi\v cevi\' c]{ddragicevic@math.uniri.hr}

\keywords{shadowing, nonautonomous systems, nonlinear dynamics, dichotomies}
\subjclass[2010]{Primary: 37C50, 34D09; Secondary: 34D10}

\maketitle

\begin{abstract}
Given a nonautonomous and nonlinear differential equation
\begin{equation}\label{DE}
x'=A(t)x+f(t,x) \quad t\geq 0,
\end{equation}
on an arbitrary Banach space $X$, we formulate  very general conditions for the associated linear equation $x'=A(t)x$ and for the nonlinear term $f:[0,+\infty)\times X\to X$ under which the above system satisfies an appropriate  version of the shadowing property. More precisely, we require that $x'=A(t)x$ admits a very general type of dichotomy,  which includes the classical hyperbolic behaviour as a very particular case. In addition, we require that $f$ is Lipschitz in the second variable with a sufficiently small Lipschitz constant. Our general framework enables us to treat various settings in which no shadowing result has been previously obtained. Moreover, we are able  to recover and refine several known results. 
 We also show how our main results can be applied to the study of the shadowing property for  higher order differential equations. Finally, we conclude the paper by presenting a discrete time versions of our results.
\end{abstract}

\section{Introduction}

It is a very well known fact that, in general, it might be very complicated or even impossible to explicitly find a solution of a given differential equation. Nevertheless, such complicated equations may be models to some important classes of  dynamics arising naturally in science and thus it is necessary to study them. A possible approach to the  study this kind of systems is the following: instead of looking for exact solutions of these differential equations,  we could look for their approximate solutions.

Over the last decades, computational power has reached a high state of development allowing one to construct very good approximate solutions of a given differential equation over large intervals of time. However, the information given by these approximate solutions will be useful only if they correspond to actual solutions of the original equation, i.e. only if there exists a true orbit of the system with a slightly different initial condition that stays near the approximate trajectory. The systems exhibiting this property are said to have the \emph{shadowing property}.

The aim of the present paper is to present general conditions under which a nonautonomous system has the shadowing property. More precisely, given a nonautonomous and nonlinear  differential equation
\begin{equation}\label{eq: eq intro}
x'=A(t)x+f(t,x) \quad t\geq 0,
\end{equation} 
on an arbitrary Banach space $X$, we  present very general conditions  for the associated linear equation $x'=A(t)x$  and for the nonlinear term  $f:[0,+\infty)\times X\to X$ under which \eqref{eq: eq intro} has a version of the shadowing property. 

Following our recent paper~\cite{BD2} (which in turn is inspired by recent works on the so-called quasi-shadowing property in the context of smooth dynamics~\cite{CRV, HZZ, LZ, KT, ZZ}), we assume that $X$ splits into three directions: the stable, the unstable and the central direction. All three directions are 
invariant under the action of the dynamics associated with the linear equation $x'=A(t)x$ and finally, when restricted to the sum of the stable and the unstable direction, the dynamics admits the so-called $(\mu, \nu)$-dichotomy. Then, if $f$ is Lipschitz in the second variable with a sufficiently small Lipschitz constant, we prove that  close  to each  approximate solution $t\mapsto y(t)$ of~\eqref{DE}, one can find a map $x\mapsto x(t)$ which is a solution of~\eqref{DE} up to the movement along the central direction. In the particular case when the central direction vanishes, our result says that in a vicinity of each approximate solution of~\eqref{DE}, we can find its 
exact solution. In other words, \eqref{DE} has the shadowing property. 

The importance of our results stems from our general framework. As we already emphasized,  we assume that the linear equation $x'=A(t)x$ (when restricted to the sum of the stable and the unstable direction) admits the so-called $(\mu, \nu)$-dichotomy. This notion was 
introduced and studied by Bento and Silva~\cite{BS, BS2} and includes the classical notion of  an exponential dichotomy~\cite{C} as a very particular case. In addition, it also extends the notion of a polynomial dichotomy~\cite{BS0, BS1} as well as the notion of a generalized dichotomy as introduced by Muldowney~\cite{M}. 
It also includes several classes of asymptotic behaviour that haven't been studied previously. We refer to~\cite{BS, BS2} for a detailed  discussion.

Hence, by adapting this framework, we are able to simultaneously recover and refine several known results as well as to discuss the shadowing property in various settings in which no such results were previously available. 
 When $x'=A(t)x$ admits an exponential or a generalized exponential dichotomy, we  basically recover our previous results obtained in~\cite{BD2, BD1}. However, to the best of our knowledge all previously known results (either for continuous or discrete time) related to the shadowing of~\eqref{DE} (or its discrete time version) belong to this category. Indeed, 
the case when the nonlinear part vanishes and when the linear part is either constant or periodic was treated in~\cite{BBT,BCDMP, BLR, BRST}, while in our previous works~\cite{BD0,BD2, BD1} we dealt with the general case. 
On the other hand, there are no previous  shadowing-type results in the setting when $x'=A(t)x$ admits a $(\mu, \nu)$-dichotomy (or even a polynomial dichotomy for example). 

Consequently, our paper aims to open a new direction of research. Indeed, while in the context of the  smooth dynamics shadowing is related to some form of hyperbolicity~\cite{P,Pil}, we show that in the setting of nonautonomous dynamics it is possible to obtain shadowing-type results for the classes of dynamics which don't
exhibit hyperbolic behaviour.

Our techniques build on our previous works~\cite{BD2, BD1}, which in turn are inspired by analytic proofs of the classical shadowing lemma~\cite{MS, P1}. An important feature of the present paper is that in comparison with our previous works, we deal with the case of continuous time directly,  without passing to the associated discrete
time.

The paper is organized as follows. In Section \ref{sec: prelim} we introduce the setting we are going to work on and the conditions on $x'=A(t)x$ and $f$ that we are going to work with. In Section \ref{sec: main} we present the main result of the paper along with its proof, while in Section \ref{sec: examples} we present examples of settings to which our results are applicable as well as their relation with the results already available in the literature. Section \ref{sec: higher order} is dedicated to the  applications of our result to higher order differential equations while in Section \ref{sec: discrete} a discrete time version of our main result is presented.

\section{Preliminaries}\label{sec: prelim}

Let $X=(X, |\cdot |)$ be an arbitrary Banach space. By $\mathcal B(X)$ we will denote the space of all bounded linear operators on $X$. The operator norm on $\mathcal B(X)$ will be denoted by $\|\cdot \|$.

For a continuous map $A\colon [0, +\infty)\to \mathcal B(X)$, we  consider a linear nonautonomous equation given by
\begin{equation}\label{lde}
x'=A(t)x, \quad t\ge 0.
\end{equation}
By $T(t,s)$, $t,s\ge 0$ we will denote the evolution family associated to~\eqref{lde}. We recall that for fixed $s$ and $v\in X$, $t\mapsto T(t,s)v$ is a solution of~\eqref{lde} such that $x(s)=v$. Then, we have the following properties:
\begin{itemize}
\item $T(t,t)=\bbI$ for $t\ge 0$, where $\bbI$ denotes the identity operator on $X$;
\item $T(t,s)T(s, r)=T(t,r)$ for $t,s, r\ge 0$.
\end{itemize}

For a continuous map  $f\colon [0, +\infty)\times  X\to X$, we consider a semilinear equation given by
\begin{equation}\label{nde}
x'=A(t)x+f(t,x), \quad t\ge 0.
\end{equation}

Let us now introduce some standing assumptions. Let $\mu \colon [0, +\infty)\to \mathbb R$ be a strictly  increasing and  differentiable map such that \[\mu(0)=1 \quad  \text{and} \quad \lim_{t\to +\infty}\mu(t)=+\infty.\] In addition, let $\nu \colon [0, +\infty)\to [1,+\infty)$ be an arbitrary map. We assume the following conditions:
\begin{enumerate}
\item  there exist three families of projections $P^i(t)$, $t\ge 0$, $i\in \{1, 2, 3\}$ such that 
\[
P^1(t)+P^2(t)+P^3(t)=\bbI, \quad t\ge 0;
\]
\item for $i, j\in \{1, 2,3 \}$, $i\neq j$ and $t\ge 0$,
\[
P^i(t)P^j(t)=0;
\]
\item for arbitrary  $t,s\ge 0$ and $i\in \{1, 2, 3\}$, we have that 
\[
T(t,s)P^i(s)=P^i(t)T(t,s);
\]
\item there exist $D, \lambda >0$ and $d\ge 0$  such that 
\begin{equation}\label{d1}
\|T(t,s)P^1(s)\| \le D \bigg (\frac{\mu(t)}{\mu (s)} \bigg )^{-\lambda} \nu(s)^d \quad \text{for $t\ge s$,}
\end{equation}
and 
\begin{equation}\label{d2}
\|T(t,s)P^2(s)\| \le D \bigg (\frac{\mu (s)}{\mu (t)} \bigg )^{-\lambda}\nu(s)^d  \quad \text{for $t\le s$;}
\end{equation}
\item there exist $c\ge 0$ such that 
\begin{equation}\label{non}
|f(t,x)-f(t,y)| \le \frac{c\mu'(t)}{\mu(t) \nu (t)^d}|x-y|, \quad \text{for $t\ge 0$ and $x, y\in X$.}
\end{equation}
\end{enumerate}

For $t\ge 0$, set 
\[
E^s(t)=\Ima P^1(t), \ E^u(t)=\Ima P^2(t) \ \text{and} \ E^c(t)=\Ima P^3(t).
\]
Furthermore, we set
\[
|x|_t:= \sup_{s\ge t} \bigg (|T(s,t)x|  \bigg (\frac{\mu(s)}{\mu (t)} \bigg )^{\lambda}   \bigg) \quad \text{for $t\ge 0$ and $x\in E^s(t)$,}
\]
\[
|x|_t:= \sup_{s\le t} \bigg (|T(s,t)x| \bigg (\frac{\mu (t)}{\mu (s)} \bigg )^{\lambda} \bigg) \quad \text{for $t\ge 0$ and $x\in E^u(t)$,}
\]
and
\[
|x|_t:= \frac{\mu(t)}{\mu'(t)}|x|, \quad \text{for $t\ge 0$ and $x\in E^c(t)$.}
\]
Finally, for an arbitrary $x\in X$ let 
\[
|x|_t:= |P^1(t)x|_t+|P^2(t)x|_t+|P^3(t)x|_t.
\]
It is easy to see that $|\cdot |_t$ is a norm on $X$ for each $t\ge 0$.
We will need the following simple auxiliary  result. 
\begin{lemma} \label{lemma: comparation norms}
The following properties hold:
\begin{itemize}
\item for $t\ge 0$ and $x\in E^s(t)\cup E^u(t)$, 
\begin{equation}\label{ln}
|x|\le |x|_t ;
\end{equation}
\item for $t\ge s$ and $x\in X$, 
\begin{equation}\label{ln1}
|T(t,s)P^1(s)x|_t \le D \bigg (\frac{\mu(t)}{\mu (s)} \bigg )^{-\lambda} \nu(s)^d|x|;
\end{equation}
\item for $t\le s$ and $x\in X$, 
\begin{equation}\label{ln2}
|T(t,s)P^2(s)x|_t \le D \bigg (\frac{\mu (s)}{\mu (t)} \bigg )^{-\lambda}\nu(s)^d|x|.
\end{equation}
\end{itemize}
\end{lemma}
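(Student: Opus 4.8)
All three assertions follow directly from the definitions of the norms $|\cdot|_t$ combined with the invariance relation in item~(3) and the dichotomy estimates in item~(4), so the argument is essentially bookkeeping; here is how I would organize it.

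For \eqref{ln}, fix $t\ge 0$ and $x\in E^s(t)$. Evaluating the supremum defining $|x|_t$ at the index $s=t$ and using $T(t,t)=\bbI$ gives $|x|_t\ge |T(t,t)x|\,(\mu(t)/\mu(t))^{\lambda}=|x|$; the case $x\in E^u(t)$ is handled identically by evaluating the corresponding supremum at $s=t$.

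For \eqref{ln1}, fix $t\ge s$ and $x\in X$. The commutation relation in~(3) shows that $T(t,s)P^1(s)x=P^1(t)T(t,s)x\in E^s(t)$, so the first branch of the definition of $|\cdot|_t$ applies and, using the cocycle property $T(r,t)T(t,s)=T(r,s)$,
\[
|T(t,s)P^1(s)x|_t=\sup_{r\ge t}\Big(|T(r,s)P^1(s)x|\,\big(\tfrac{\mu(r)}{\mu(t)}\big)^{\lambda}\Big).
\]
For $r\ge t\ge s$ I would apply \eqref{d1} to obtain $|T(r,s)P^1(s)x|\le D(\mu(r)/\mu(s))^{-\lambda}\nu(s)^d|x|$, and then use the elementary identity $(\mu(r)/\mu(s))^{-\lambda}(\mu(r)/\mu(t))^{\lambda}=(\mu(t)/\mu(s))^{-\lambda}$ to see that the quantity inside the supremum is bounded by $D(\mu(t)/\mu(s))^{-\lambda}\nu(s)^d|x|$ uniformly in $r$; taking the supremum yields \eqref{ln1}. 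The proof of \eqref{ln2} is the mirror image: for $t\le s$ the relation in~(3) gives $T(t,s)P^2(s)x\in E^u(t)$, the second branch of the definition applies, and expanding it with the cocycle property and applying \eqref{d2} on the range $r\le t\le s$ (together with $(\mu(s)/\mu(r))^{-\lambda}(\mu(t)/\mu(r))^{\lambda}=(\mu(s)/\mu(t))^{-\lambda}$) yields \eqref{ln2}.

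The only point that needs a moment of care is checking that each projected evolution operator maps into the subspace corresponding to the branch of the piecewise definition of $|\cdot|_t$ that one wants to use; once that is in place, everything reduces to elementary manipulations of ratios of $\mu$. I do not anticipate any genuine obstacle here — the lemma is purely a convenient reformulation of the hypotheses in terms of the adapted norms.
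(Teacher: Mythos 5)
Your proof is correct and follows essentially the same route as the paper: evaluate the supremum at the endpoint for \eqref{ln}, and for \eqref{ln1}--\eqref{ln2} expand the adapted norm via the cocycle property and apply the dichotomy estimates \eqref{d1}--\eqref{d2}, with only a cosmetic difference in whether the estimate is applied pointwise inside the supremum or after factoring out $(\mu(t)/\mu(s))^{-\lambda}$ and enlarging the range of the supremum. Your explicit remark that $T(t,s)P^1(s)x\in E^s(t)$ (so the correct branch of the definition of $|\cdot|_t$ applies) is a point the paper leaves implicit, and it is worth keeping.
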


\begin{proof}
Observe that~\eqref{ln} follows trivially. Let us now prove~\eqref{ln1}. Using~\eqref{d1},  we have that
\[
\begin{split}
|T(t,s)P^1(s)x|_t &=\sup_{r\ge t} \bigg (|T(r,s)P^1(s)x|  \bigg (\frac{\mu(r)}{\mu (t)} \bigg )^{\lambda}     \bigg) \\
&= \bigg (\frac{\mu(t)}{\mu (s)} \bigg )^{-\lambda}   \sup_{r\ge t}\bigg  (|T(r,s)P^1(s)x| \bigg (\frac{\mu(r)}{\mu(s)} \bigg )^{\lambda}  \bigg) \\
&\le \bigg (\frac{\mu(t)}{\mu (s)} \bigg )^{-\lambda}   \sup_{r\ge s}\bigg  (|T(r,s)P^1(s)x| \bigg (\frac{\mu(r)}{\mu(s)} \bigg )^{\lambda}  \bigg) \\
&\le D \bigg (\frac{\mu(t)}{\mu (s)} \bigg )^{-\lambda} \nu(s)^d |x|.
\end{split} 
\]
Similarly, one can establish~\eqref{ln2}.
\end{proof}

\section{Main result}\label{sec: main}
We are now in a position to establish our main result. 

\begin{theorem}\label{thm1}
Assume that 
\begin{equation}\label{C}
q:=c(2D+1)+2cD\lambda^{-1} <1.
\end{equation}
Then, there exists $C>0$ with the property that for any $\delta >0$ and an arbitrary differentiable map $y\colon [0, +\infty) \to X$ satisfying
\begin{equation}\label{pseudo}
|y'(t)-A(t)y(t)-f(t, y(t))| \le \frac{\delta \mu'(t)}{\mu(t) \nu(t)^d} \quad \text{for $t\ge 0$,}
\end{equation}
there exists a differentiable map $x\colon [0, +\infty)\to X$ such that:
\begin{itemize}
\item $P^3(t)(x(t)-y(t))=0$ for $t\ge 0$;
\item $\sup_{t\ge 0} |x(t)-y(t)| \le C\delta$;
\item $x'(t)-A(t)x(t)-f(t, x(t))\in \Ima P^3(t)$ for $t\ge 0$;
\item for $t\ge 0$,
\begin{equation}\label{EST}
|x'(t)-A(t)x(t)-f(t, x(t))| \le C\delta(2D+1)\nu (t)^d \max \bigg \{ 1, \frac{\mu'(t)}{\mu(t)} \bigg \}.
\end{equation}
\end{itemize}
\end{theorem}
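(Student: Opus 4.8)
The plan is to work with the error $z(t):=x(t)-y(t)$ and to obtain $z$ as a fixed point of a Lyapunov--Perron-type integral operator. Write $g(t):=y'(t)-A(t)y(t)-f(t,y(t))$, so that $|g(t)|\le \delta\mu'(t)/(\mu(t)\nu(t)^d)$ by \eqref{pseudo}, and for a bounded continuous $z$ set $h_z(t):=f(t,y(t)+z(t))-f(t,y(t))$, so that $|h_z(t)|\le c\mu'(t)|z(t)|/(\mu(t)\nu(t)^d)$ by \eqref{non}. A direct computation gives
$$x'(t)-A(t)x(t)-f(t,x(t))=z'(t)-A(t)z(t)+g(t)-h_z(t),$$
so it suffices to produce a bounded differentiable $z$ with $z(t)\in E^s(t)\oplus E^u(t)$ for all $t$ and $z'(t)-A(t)z(t)=(P^1(t)+P^2(t))(h_z(t)-g(t))$; indeed, then the $P^1$- and $P^2$-components of the defect vanish, $P^3(t)(x(t)-y(t))=P^3(t)z(t)=0$, and the defect equals $P^3(t)(g(t)-h_z(t))\in\Ima P^3(t)$. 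Using the splitting \eqref{d1}--\eqref{d2}, such a $z$ is exactly a solution of $z=\Gamma z$, where
$$(\Gamma z)(t):=\int_0^t T(t,\tau)P^1(\tau)\big(h_z(\tau)-g(\tau)\big)\,d\tau-\int_t^{+\infty}T(t,\tau)P^2(\tau)\big(h_z(\tau)-g(\tau)\big)\,d\tau;$$
by property (3) the first term lies in $E^s(t)$ and the second in $E^u(t)$, and by differentiating under the integral sign (after an integration by parts to absorb the term $y'$) one checks that $\Gamma z$ is differentiable and satisfies the required differential identity.

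I would then apply the Banach fixed-point theorem to $\Gamma$ on the complete space of bounded continuous maps $z\colon[0,+\infty)\to X$ with $z(t)\in E^s(t)\oplus E^u(t)$, endowed with the supremum norm $\|z\|_\infty:=\sup_{t\ge 0}|z(t)|$ (this subspace is closed in $C_b([0,+\infty),X)$ since the constraint $z(t)\in\Ker P^3(t)$ survives uniform limits). Absolute convergence of the improper integral and the contraction estimates all follow from Lemma \ref{lemma: comparation norms}: one bounds the $E^s(t)$- and $E^u(t)$-components of $\Gamma z$ in the adapted norm $|\cdot|_t$ via \eqref{ln1}--\eqref{ln2} and then passes back to $|\cdot|$ by \eqref{ln}, rewrites \eqref{non} and \eqref{pseudo} in the form $\nu(\tau)^d|h_z(\tau)-g(\tau)|\le(\mu'(\tau)/\mu(\tau))(c|z(\tau)|+\delta)$, and uses the elementary bounds (obtained by the substitution $u=\mu(\tau)$)
$$\int_0^t\Big(\frac{\mu(\tau)}{\mu(t)}\Big)^{\lambda}\frac{\mu'(\tau)}{\mu(\tau)}\,d\tau\le\frac1\lambda,\qquad\int_t^{+\infty}\Big(\frac{\mu(t)}{\mu(\tau)}\Big)^{\lambda}\frac{\mu'(\tau)}{\mu(\tau)}\,d\tau\le\frac1\lambda.$$
This yields estimates of the form $\|\Gamma z\|_\infty\le\mathrm{const}\cdot(\delta+c\|z\|_\infty)$ and $\|\Gamma z-\Gamma\bar z\|_\infty\le\mathrm{const}\cdot c\,\|z-\bar z\|_\infty$, and assumption \eqref{C} is precisely what is needed to make $\Gamma$ a contraction and to guarantee that its unique fixed point $z^\ast$ satisfies $\sup_{t\ge 0}|z^\ast(t)|\le C\delta$ for some constant $C$ depending only on $D$, $\lambda$, $c$.

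Setting $x:=y+z^\ast$, the first three conclusions hold by construction. For the last one, $x'(t)-A(t)x(t)-f(t,x(t))=P^3(t)(g(t)-h_{z^\ast}(t))$; evaluating \eqref{d1}--\eqref{d2} at $t=s$ gives $\|P^1(t)\|,\|P^2(t)\|\le D\nu(t)^d$, hence $\|P^3(t)\|=\|\bbI-P^1(t)-P^2(t)\|\le 1+2D\nu(t)^d\le(2D+1)\nu(t)^d$ (using $\nu(t)\ge 1$), and combining this with $|g(t)-h_{z^\ast}(t)|\le(\delta+c\sup_s|z^\ast(s)|)\mu'(t)/(\mu(t)\nu(t)^d)$ yields \eqref{EST}, after enlarging $C$ if necessary so that the same constant serves all four conclusions. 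The delicate point — and what makes the general framework worthwhile — is arranging the estimates so that they close under the abstract weights $\mu$ and $\nu$: one must choose the adapted norm $|\cdot|_t$ so that Lemma \ref{lemma: comparation norms} converts the $(\mu,\nu)$-dichotomy bounds into the two clean integral inequalities above, and one must separately justify the absolute convergence of the backward integral defining the unstable component of $\Gamma$ and the differentiability of the fixed point. Once these points are settled, the remaining computations are routine adaptations of the analytic proof of the classical shadowing lemma.
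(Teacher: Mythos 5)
Your proposal is correct and it takes a genuinely different (and in fact slightly leaner) route than the paper's. The paper's operator $\mathcal T$ acts on \emph{all} of $\mathcal Y$ and carries an explicit $P^3$-component, namely the term $-P^3(t)(A(t)y(t)+f(t,y(t)+(\bbI-P^3(t))z(t))-y'(t))$; the fixed point $z$ therefore has a nontrivial central part, and only afterwards does one extract $\bar z=(\bbI-P^3)z$ and set $x=y+\bar z$. To control the central part uniformly, the paper introduces the $t$-dependent adapted norm $|\cdot|_t$ (in particular $|x|_t=\frac{\mu(t)}{\mu'(t)}|x|$ on $E^c(t)$) and runs the contraction in that norm; the central component of $\mathcal T$ then contributes the summand $c(2D+1)$ to the Lipschitz constant $q$ in \eqref{C}. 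You instead eliminate the central component from the outset: your $\Gamma$ acts on the closed subspace $\{z:\,z(t)\in E^s(t)\oplus E^u(t)\}$ of $C_b([0,\infty),X)$ equipped with the \emph{original} sup-norm, and consists only of the two dichotomy integrals. This removes the over-parameterization — the central part of the paper's fixed point is in any case a function of $\bar z$ — so your Lipschitz constant is just $2cD\lambda^{-1}$, strictly smaller than $q$, and assumption \eqref{C} is more than enough (only $2cD\lambda^{-1}<1$ is actually needed). Your defect estimate is also sharper: you obtain $|x'-Ax-f(t,x)|=|P^3(t)(g(t)-h_{z^\ast}(t))|\le(2D+1)\frac{\mu'(t)}{\mu(t)}(\delta+c\|z^\ast\|_\infty)$, without the factor $\nu(t)^d\max\{1,\mu'(t)/\mu(t)\}$ that the paper picks up when passing back from $|\cdot|_t$ to $|\cdot|$ on $E^c(t)$. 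Both routes use the same dichotomy integral bounds $\int_0^t(\mu(\tau)/\mu(t))^\lambda\frac{\mu'(\tau)}{\mu(\tau)}\,d\tau\le\lambda^{-1}$ and $\int_t^\infty(\mu(t)/\mu(\tau))^\lambda\frac{\mu'(\tau)}{\mu(\tau)}\,d\tau\le\lambda^{-1}$, and both verify differentiability of the fixed point by the same identity $z(t)-T(t,r)z(r)=\int_r^tT(t,\tau)(P^1(\tau)+P^2(\tau))(h_z(\tau)-g(\tau))\,d\tau$.

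One small inaccuracy: the parenthetical remark ``(after an integration by parts to absorb the term $y'$)'' is misplaced. Since you defined $g:=y'-Ay-f(\cdot,y)$ before building $\Gamma$, the term $y'$ is already absorbed; no integration by parts is needed, and the differentiability of the fixed point follows exactly as in the paper by differentiating the identity displayed above. (Both you and the paper implicitly use that $g$ is continuous, i.e.\ that $y'$ is continuous, which is not stated in the hypotheses; this is a harmless gloss common to both arguments.) Apart from this, your argument is complete and closes all the gaps you flag at the end.
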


\begin{remark}
We observe that this result may be seen as a version of the shadowing property. Indeed, Theorem \ref{thm1} says that given a ``pseudo-trajectory" $t\mapsto y(t)$ of \eqref{nde}, there exists a map $t\mapsto x(t)$ which is a solution of \eqref{nde} up to moving it in the central direction that ``shadows" $t\mapsto y(t)$ and agrees with it in the central direction $E^c(t)$. In particular, whenever $P^3(t)=0$,  the map $t\mapsto x(t)$ is an actual solution of \eqref{nde} and we get the usual shadowing property (see Corollary~\ref{cor}). Moreover, if $P^3(t)\neq 0$ then one cannot expect that, in general, the map $t\mapsto x(t)$ is a solution of \eqref{nde} (see \cite{BDT}) and in this case condition \eqref{EST} gives us an estimate for the deviation of $x(t)$ from being such a solution.
\end{remark}

\begin{proof}[Proof of Theorem~\ref{thm1}]
Let
\[
\mathcal Y:=\bigg \{ z\colon [0, +\infty) \to X:  \text{$z$ is continuous and} \ \|z\|_\infty:=\sup_{t\ge 0} |z(t)|_t <+\infty \bigg \}.
\]
Then, $(\mathcal Y, \|\cdot \|_\infty)$ is a Banach space. For $z\in \mathcal Y$, we (formally) set
\[
\begin{split}
(\mathcal Tz)(t) &=-P^3(t)(A(t)y(t)+f(t, y(t)+(\bbI-P^3(t))z(t))-y'(t)) \\
&\phantom{=}+\int_0^t T(t,s)P^1(s)(A(s)y(s)+f(s, y(s)+(\bbI-P^3(s))z(s))-y'(s))\, ds \\
&\phantom{=}-\int_t^\infty T(t,s)P^2(s)(A(s)y(s)+f(s, y(s)+(\bbI-P^3(s))z(s))-y'(s))\, ds,
\end{split}
\]
$t\ge 0$. Observe that it follows from~\eqref{ln1} and~\eqref{pseudo} that 
\[
\begin{split}
 & \bigg |\int_0^t T(t,s)P^1(s)(A(s)y(s)+f(s, y(s))-y'(s))\, ds \bigg |_t \\
&\le \int _0^t |T(t,s)P^1(s)(A(s)y(s)+f(s, y(s))-y'(s))|_t\, ds \\
&\le D\int_0^t \bigg (\frac{\mu(t)}{\mu (s)} \bigg )^{-\lambda} \nu(s)^d|A(s)y(s)+f(s, y(s))-y'(s)|\, ds \\
&\le D\delta \int_0^t \bigg (\frac{\mu(t)}{\mu (s)} \bigg )^{-\lambda} \frac{\mu'(s)}{\mu(s)} \, ds \\
&=\frac{D\delta}{\lambda} \bigg (1-\frac{1}{ \mu(t)^\lambda} \bigg ),
\end{split}
\]
and thus
\begin{equation}\label{01}
\sup_{t\ge 0} \bigg |\int_0^t T(t,s)P^1(s)(A(s)y(s)+f(s, y(s))-y'(s))\, ds \bigg |_t  \le \frac{D\delta}{\lambda}.
\end{equation}

Similarly, using~\eqref{ln2} and~\eqref{pseudo} we have that 
\[
\begin{split}
& \bigg |\int_t^\infty T(t,s)P^2(s)(A(s)y(s)+f(s, y(s))-y'(s))\, ds \bigg |_t \\
&\le D \int_t^\infty \bigg (\frac{\mu(s)}{\mu (t)} \bigg )^{-\lambda} \nu(s)^d |A(s)y(s)+f(s, y(s))-y'(s) |\, ds \\
&\le D\delta \int_t^\infty \bigg (\frac{\mu(s)}{\mu (t)} \bigg )^{-\lambda} \frac{\mu'(s)}{\mu(s)} \, ds \\
&=\frac{D\delta}{\lambda},
\end{split}
\]
and consequently 
\begin{equation}\label{02}
\sup_{t\ge 0}  \bigg |\int_t^\infty T(t,s)P^2(s)(A(s)y(s)+f(s, y(s))-y'(s))\, ds \bigg |_t \le \frac{D\delta}{\lambda}.
\end{equation}
In addition, we have 
\begin{equation}\label{aux}
\begin{split}
& |P^3(t)(A(t)y(t)+f(t, y(t))-y'(t)) |_t \\
&=\frac{\mu(t)}{\mu'(t)} |P^3(t)(A(t)y(t)+f(t, y(t))-y'(t)) | \\
&\le \frac{\mu(t)}{\mu'(t)} (2D+1)\nu(t)^d |A(t)y(t)+f(t, y(t))-y'(t)|,
\end{split}
\end{equation}
and therefore 
\begin{equation}\label{03}
\sup_{t\ge 0} |P^3(t)(A(t)y(t)+f(t, y(t))-y'(t)) |_t  \le \delta (2D+1).
\end{equation}
Observe that in~\eqref{aux},  we have used (see~\eqref{d1} and~\eqref{d2}) that 
\[
|P^3(t)x| \le |x|+|P^1(t)x|+|P^2(t)x|  \le |x|+2D\nu(t)^d |x| \le (2D+1)\nu(t)^d |x|,
\]
for $t\ge 0$ and $x\in X$. From~\eqref{01}, \eqref{02} and~\eqref{03}, we conclude  that
\begin{equation}\label{T0}
\|\mathcal T0\|_\infty \le \bar D\delta,
\end{equation}
where
\[
\bar D:=2D+2D\lambda^{-1}+1>0.
\]

Take now arbitrary $z_1, z_2\in \mathcal Y$. By~\eqref{non}, \eqref{ln} and~\eqref{ln1}, we have that 
\[
\begin{split}
& \bigg |\int_0^t T(t,s)P^1(s)(f(s, y(s)+(\bbI-P^3(s))z_1(s))-f(s, y(s)+(\bbI-P^3(s))z_2(s)))\, ds \bigg |_t \\
&\le D \int_0^t \bigg (\frac{\mu(t)}{\mu(s)} \bigg )^{-\lambda}\nu(s)^d |f(s, y(s)+(\bbI-P^3(s))z_1(s))-f(s, y(s)+(\bbI-P^3(s))z_2(s))| \, ds \\
&\le Dc\int_0^t \bigg (\frac{\mu(t)}{\mu(s)} \bigg )^{-\lambda} \frac{\mu'(s)}{\mu(s)} |(\bbI-P^3(s))z_1(s)-(\bbI-P^3(s))z_2(s)|\, ds \\
&\le  Dc\int_0^t \bigg (\frac{\mu(t)}{\mu(s)} \bigg )^{-\lambda} \frac{\mu'(s)}{\mu(s)} (|P^1(s)(z_1(s)-z_2(s))|+|P^2(s)(z_1(s)-z_2(s))|)\, ds \\
&\le  Dc\int_0^t \bigg (\frac{\mu(t)}{\mu(s)} \bigg )^{-\lambda} \frac{\mu'(s)}{\mu(s)} (|P^1(s)(z_1(s)-z_2(s))|_s+|P^2(s)(z_1(s)-z_2(s))|_s)\, ds \\
&\le Dc\int_0^t \bigg (\frac{\mu(t)}{\mu(s)} \bigg )^{-\lambda} \frac{\mu'(s)}{\mu(s)} |z_1(s)-z_2(s)|_s\, ds \\
&= \frac{Dc}{\lambda} \bigg (1-\frac{1}{\mu(t)^\lambda} \bigg ) \|z_1-z_2\|_\infty \\
&\le \frac{Dc}{\lambda}  \|z_1-z_2\|_\infty,
\end{split}
\]
and thus 
\begin{equation}\label{l1}
\begin{split}
& \sup_{t\ge 0}\bigg |\int_0^t T(t,s)P^1(s)(f(s, y(s)+(\bbI-P^3(s))z_1(s))-f(s, y(s)+(\bbI-P^3(s))z_2(s)))\, ds \bigg |_t \\
&\le \frac{Dc}{\lambda} \|z_1-z_2\|_\infty. 
\end{split}
\end{equation}
Similarly, 
\[
\begin{split}
& \bigg |\int_t^\infty T(t,s)P^2(s)(f(s, y(s)+(\bbI-P^3(s))z_1(s))-f(s, y(s)+(\bbI-P^3(s))z_2(s)))\, ds \bigg |_t \\
&\le D \int_t^\infty \bigg (\frac{\mu(s)}{\mu(t)} \bigg )^{-\lambda}\nu(s)^d |f(s, y(s)+(\bbI-P^3(s))z_1(s))-f(s, y(s)+(\bbI-P^3(s))z_2(s))| \, ds \\
&\le Dc\int_t^\infty \bigg (\frac{\mu(s)}{\mu(t)} \bigg )^{-\lambda} \frac{\mu'(s)}{\mu(s)} |(\bbI-P^3(s))z_1(s)-(\bbI-P^3(s))z_2(s)|\, ds \\
&\le  Dc\int_t^\infty \bigg (\frac{\mu(s)}{\mu(t)} \bigg )^{-\lambda} \frac{\mu'(s)}{\mu(s)} (|P^1(s)(z_1(s)-z_2(s))|+|P^2(s)(z_1(s)-z_2(s))|)\, ds \\
&\le  Dc\int_t^\infty \bigg (\frac{\mu(s)}{\mu(t)} \bigg )^{-\lambda} \frac{\mu'(s)}{\mu(s)} (|P^1(s)(z_1(s)-z_2(s))|_s+|P^2(s)(z_1(s)-z_2(s))|_s)\, ds \\
&\le Dc\int_t^\infty \bigg (\frac{\mu(s)}{\mu(t)} \bigg )^{-\lambda} \frac{\mu'(s)}{\mu(s)} |z_1(s)-z_2(s)|_s\, ds \\
&\le \frac{Dc}{\lambda}  \|z_1-z_2\|_\infty,
\end{split}
\]
and therefore
\begin{equation}\label{l2}
\begin{split}
& \sup_{t\ge 0}\bigg |\int_t^\infty T(t,s)P^2(s)(f(s, y(s)+(\bbI-P^3(s))z_1(s))-f(s, y(s)+(\bbI-P^3(s))z_2(s)))\, ds \bigg |_t \\
&\le \frac{Dc}{\lambda} \|z_1-z_2\|_\infty. 
\end{split}
\end{equation}
Finally, 
\[
\begin{split}
& |P^3(t)(f(t, y(t)+(\bbI-P^3(t))z_1(t))-f(t, y(t)+(\bbI-P^3(t))z_2(t)))|_t \\
&=\frac{\mu(t)}{\mu'(t)} |P^3(t)(f(t, y(t)+(\bbI-P^3(t))z_1(t))-f(t, y(t)+(\bbI-P^3(t))z_2(t)))| \\
&\le \frac{\mu(t)}{\mu'(t)}(2D+1)\nu(t)^d |f(t, y(t)+(\bbI-P^3(t))z_1(t))-f(t, y(t)+(\bbI-P^3(t))z_2(t))| \\
&\le c(2D+1)|(\bbI-P^3(t))z_1(t)-(\bbI-P^3(t))z_2(t)| \\
&\le c(2D+1)| z_1(t)-z_2(t)|_t,
\end{split}
\]
and hence
\begin{equation}\label{l3}
\begin{split}
&\sup_{t\ge 0}|P^3(t)(f(t, y(t)+(\bbI-P^3(t))z_1(t))-f(t, y(t)+(\bbI-P^3(t))z_2(t)))|_t \\
& \le c(2D+1)\|z_1-z_2\|_\infty. 
\end{split}
\end{equation}
From~\eqref{l1}, \eqref{l2} and~\eqref{l3}, we conclude that 
\begin{equation}\label{conc}
\|\mathcal Tz_1-\mathcal Tz_2\|_\infty \le q\|z_1-z_2\|_\infty.
\end{equation}
Set
\[
C:=\frac{\bar D}{1-q}, 
\]
and let
\[
\mathcal B:=\{ z\in \mathcal Y: \|z\|_\infty \le C\delta \}.
\]
We claim that $\mathcal T(\mathcal B)\subset \mathcal B$. Indeed, take an arbitrary $z\in \mathcal B$. It follows from~\eqref{T0} and~\eqref{conc} that 
\[
\begin{split}
\| \mathcal T z\|_\infty &\le \|\mathcal T0\|_\infty + \|\mathcal Tz-\mathcal T0\|_\infty \\
&\le \bar D\delta+q\|z\|_\infty \\
&\le \bar D\delta+qC\delta \\
&=C\delta, 
\end{split}
\]
and thus $\mathcal Tz\in \mathcal B$. From~\eqref{C} and~\eqref{conc}, we conclude that $\mathcal T\rvert_{\mathcal B} \colon \mathcal B\to \mathcal B$ is a contraction, and therefore it has a unique fixed point $z\in \mathcal B$. Hence, for each $t\ge 0$ we have that 
\begin{equation}\label{fp}
\begin{split}
z(t) &=-P^3(t)(A(t)y(t)+f(t, y(t)+(\bbI-P^3(t))z(t))-y'(t)) \\
&\phantom{=}+\int_0^t T(t,s)P^1(s)(A(s)y(s)+f(s, y(s)+(\bbI-P^3(s))z(s))-y'(s))\, ds \\
&\phantom{=}-\int_t^\infty T(t,s)P^2(s)(A(s)y(s)+f(s, y(s)+(\bbI-P^3(s))z(s))-y'(s))\, ds.
\end{split}
\end{equation}
Set 
\[
\begin{split}
\bar z(t) &=\int_0^t T(t,s)P^1(s)(A(s)y(s)+f(s, y(s)+(\bbI-P^3(s))z(s))-y'(s))\, ds \\
&\phantom{=}-\int_t^\infty T(t,s)P^2(s)(A(s)y(s)+f(s, y(s)+(\bbI-P^3(s))z(s))-y'(s))\, ds,
\end{split}
\]
for $t\ge 0$. Observe that $\bar z(t)=(\bbI-P^3(t))z(t)$, $t\ge 0$. Moreover, for $t\ge r\ge 0$ we have that 
\[
\begin{split}
&\bar z(t)-T(t,r)\bar z(r) \\
&=\int_0^t T(t,s)P^1(s)(A(s)y(s)+f(s, y(s)+(\bbI-P^3(s))z(s))-y'(s))\, ds  \\
&\phantom{=}-\int_0^rT(t,s)P^1(s)(A(s)y(s)+f(s, y(s)+(\bbI-P^3(s))z(s))-y'(s))\, ds  \\
&\phantom{=}-\int_t^\infty T(t,s)P^2(s)(A(s)y(s)+f(s, y(s)+(\bbI-P^3(s))z(s))-y'(s))\, ds\\
&\phantom{=}+\int_r^\infty T(t,s)P^2(s)(A(s)y(s)+f(s, y(s)+(\bbI-P^3(s))z(s))-y'(s))\, ds,
\end{split}
\]
and consequently 
\[
\begin{split}
&\bar z(t)-T(t,r)\bar z(r) \\
&=\int_r^t T(t,s)P^1(s)(A(s)y(s)+f(s, y(s)+(\bbI-P^3(s))z(s))-y'(s))\, ds  \\
&\phantom{=}+\int_r^t T(t,s)P^2(s)(A(s)y(s)+f(s, y(s)+(\bbI-P^3(s))z(s))-y'(s))\, ds.
\end{split}
\]
By differentiating, we obtain that
\[
\bar z'(t)=A(t)\bar z(t)+(P^1(t)+P^2(t))(A(t)y(t)+f(t, y(t)+\bar z(t))-y'(t)),
\]
for $t\ge 0$. This easily implies that 
\begin{equation}\label{sol}
\begin{split}
(y+\bar z)'(t) &=A(t)(y(t)+\bar z(t))+f(t, y(t)+\bar  z(t)) \\
&\phantom{=}-P^3(t)(A(t)y(t)+f(t, y(t)+\bar z(t))-y'(t)),
\end{split}
\end{equation}
for $t\ge 0$. We define $x\colon [0, +\infty)\to X$ by
\[
x(t)=y(t)+\bar z(t), \quad t\ge 0.
\]
Then, 
\[
P^3(t)(x(t)-y(t))=P^3(t)\bar z(t)=P^3(t)(\bbI-P^3(t))z(t)=0,
\]
for $t\ge 0$. Moreover, using~\eqref{ln} we have that 
\[
|\bar z(t)| =|(P^1(t)+P^2(t))z(t)|\le |P^1(t)z(t)|_t+|P^2(t)z(t)|_t \le |z(t)|_t \le \|z\|_\infty,
\]
for each $t\ge 0$. Thus, 
\[
\sup_{t\ge 0} |x(t)-y(t)| =\sup_{t\ge 0} |\bar z(t)| \le C\delta. 
\]
In addition, \eqref{sol} implies that $x'(t)-A(t)x(t)-f(t, x(t))\in \Ima P^3(t)$ for $t\ge 0$. It remains to establish~\eqref{EST}. We have that 
\begin{displaymath}
\begin{split}
& |x'(t)-A(t)x(t)-f(t, x(t))| \\
&=|-P^3(t)(A(t)y(t)+f(t, y(t)+\bar z(t))-y'(t))|\\
 &=|-P^3(t)(A(t)y(t)+f(t, y(t)+(\bbI-P^3(t))z(t))-y'(t))|\\
 &=|z(t)-\bar{z}(t)|\\
 &=|z(t)-(\bbI-P^3(t))z(t))|\\
 &=|P^3(t)z(t)|\\
 &\leq (2D+1)\nu (t)^d|z(t)|\\
 &\leq (2D+1)\nu (t)^d \max \bigg \{ 1, \frac{\mu'(t)}{\mu(t)} \bigg \} |z(t)|_t\\
 &\leq C\delta(2D+1)\nu (t)^d\max \bigg \{ 1, \frac{\mu'(t)}{\mu(t)} \bigg \}, 
\end{split}
\end{displaymath}
where we used that 
\[
\begin{split}
|z(t)| &\le |P^1(t)z(t)|+|P^2(t)z(t)|+|P^3(t)z(t)| \\
&\le |P^1(t)z(t)|_t+|P^2(t)z(t)|_t+\frac{\mu'(t)}{\mu(t)} |P^3(t)z(t)|_t \\
&\le \max \bigg \{ 1, \frac{\mu'(t)}{\mu(t)} \bigg \} (|P^1(t)z(t)|_t+|P^2(t)z(t)|_t+|P^3(t)z(t)|_t) \\
&=\max \bigg \{ 1, \frac{\mu'(t)}{\mu(t)} \bigg \} |z(t)|_t,
\end{split}
\]
for $t\ge 0$. Thus, \eqref{EST} holds  and the proof of the theorem is completed. 
\end{proof}

Obviously, Theorem~\ref{thm1} is most interesting in the particular case when $P^3(t)=0$ for $t\ge 0$. Indeed, in this case we can in the vicinity of each approximate solution of~\eqref{nde} construct an exact solution of~\eqref{nde}. More precisely, we have the following result.
\begin{corollary}\label{cor}
Assume that $P^3(t)=0$ for each $t\ge 0$ and that~\eqref{C} holds.  Then, there exists $C>0$ with the property that for any $\delta >0$ and an arbitrary differentiable map $y\colon [0, +\infty) \to X$ satisfying~\eqref{pseudo}, there exists a solution $x\colon [0, +\infty)\to X$ of~\eqref{nde} 
such that 
\begin{equation}\label{shad}
\sup_{t\ge 0} |x(t)-y(t)| \le C\delta. 
\end{equation}
\end{corollary}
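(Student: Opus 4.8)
\textbf{Proof proposal for Corollary~\ref{cor}.}
The plan is to observe that Corollary~\ref{cor} is an immediate specialization of Theorem~\ref{thm1} to the case $P^3(t)\equiv 0$, so that essentially no new work is required. First I would apply Theorem~\ref{thm1} directly: the hypothesis~\eqref{C} is assumed, so there is a constant $C>0$ such that for every $\delta>0$ and every differentiable $y\colon[0,+\infty)\to X$ satisfying~\eqref{pseudo}, one obtains a differentiable map $x\colon[0,+\infty)\to X$ with the four listed properties. The second property gives precisely the shadowing estimate~\eqref{shad}, namely $\sup_{t\ge 0}|x(t)-y(t)|\le C\delta$, so the only thing left to check is that $x$ is an honest solution of~\eqref{nde}.

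Next I would invoke the third conclusion of Theorem~\ref{thm1}, which says $x'(t)-A(t)x(t)-f(t,x(t))\in\Ima P^3(t)$ for all $t\ge 0$. Since by assumption $P^3(t)=0$ for every $t\ge 0$, we have $\Ima P^3(t)=\{0\}$, and hence $x'(t)-A(t)x(t)-f(t,x(t))=0$ for all $t\ge 0$; that is, $x$ solves~\eqref{nde} on $[0,+\infty)$. (Alternatively, one can read this off from~\eqref{EST}: with $P^3(t)=0$ the right-hand side is multiplied by a vanishing projection in the proof, forcing the left-hand side to be zero.) Combining this with the estimate from the previous paragraph completes the argument.

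There is essentially no obstacle here: the corollary is a clean corollary, and the entire content is the substitution $P^3\equiv 0$ into the already-established Theorem~\ref{thm1}. If one wanted a self-contained argument one could note that when $P^3(t)\equiv 0$ the map $\mathcal T$ in the proof of Theorem~\ref{thm1} loses its first (central-direction) term and the norm $|\cdot|_t$ no longer has a central component, but rerunning the fixed-point argument would only reproduce what Theorem~\ref{thm1} already gives, so the one-line deduction above is the natural proof.
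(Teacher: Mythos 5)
Your proposal is correct and matches the paper's proof, which simply states that the conclusion follows directly from Theorem~\ref{thm1}; your extra observation that the third conclusion together with $\Ima P^3(t)=\{0\}$ forces $x'(t)-A(t)x(t)-f(t,x(t))=0$ is exactly the implicit content of that one-line deduction.
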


\begin{proof}
The desired conclusion follows directly from Theorem~\ref{thm1}.
\end{proof}
We observe that Corollary~\ref{cor} is in particular applicable to a linear equation~\eqref{lde}.
\begin{corollary}\label{corx}
There exists $C>0$ with the property that for any $\delta >0$ and an arbitrary differentiable map $y\colon [0, +\infty) \to X$ satisfying
\[
|y'(t)-A(t)y(t)| \le \frac{\delta \mu'(t)}{\mu(t) \nu(t)^d} \quad \text{for $t\ge 0$,}
\]
there exists a solution $x\colon [0, +\infty)\to X$ of~\eqref{lde} such that~\eqref{shad} holds.
\end{corollary}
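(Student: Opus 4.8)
The plan is to obtain Corollary~\ref{corx} as the special case of Corollary~\ref{cor} in which the semilinear equation~\eqref{nde} degenerates to the linear equation~\eqref{lde}. Concretely, I would apply Corollary~\ref{cor} with the trivial nonlinearity $f \equiv 0$ and with vanishing central projections, $P^3(t) = 0$ for all $t \ge 0$ (equivalently $P^1(t) + P^2(t) = \bbI$). One first checks that the framework of Section~\ref{sec: prelim} does admit this choice: conditions (1)--(3) hold trivially for a pair of complementary invariant projections, condition (4) is precisely the $(\mu,\nu)$-dichotomy assumed on the evolution family $T(t,s)$, and condition (5) holds vacuously for $f = 0$ with $c = 0$.

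Next I would observe that with $c = 0$ the quantity in~\eqref{C} becomes $q = c(2D+1) + 2cD\lambda^{-1} = 0 < 1$, so the hypothesis of Corollary~\ref{cor} is automatically satisfied, and the constant it produces is $C = \bar D/(1-q) = 2D + 2D\lambda^{-1} + 1$, depending only on $D$ and $\lambda$ and in particular independent of $\delta$ and of $y$. Moreover, with $f \equiv 0$ the pseudo-trajectory inequality~\eqref{pseudo} is literally the inequality $|y'(t) - A(t)y(t)| \le \delta\mu'(t)/(\mu(t)\nu(t)^d)$ assumed in the statement of Corollary~\ref{corx}. Hence Corollary~\ref{cor} applies and yields a map $x\colon [0,+\infty) \to X$ solving $x'(t) = A(t)x(t) + f(t,x(t)) = A(t)x(t)$, i.e. a genuine solution of~\eqref{lde}, with $\sup_{t\ge 0}|x(t)-y(t)| \le C\delta$, which is exactly~\eqref{shad}.

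I do not expect a genuine obstacle here, since the corollary is a direct specialization; the only points meriting a line of verification are the two just mentioned, namely that the standing assumptions of Section~\ref{sec: prelim} are compatible with the degenerate data $(f,P^3) = (0,0)$, and that the shadowing constant delivered by Theorem~\ref{thm1} (through Corollary~\ref{cor}) is uniform in $\delta$ and $y$ as the statement requires.
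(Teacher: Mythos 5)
Your proof is correct and coincides with the paper's own argument: specialize Corollary~\ref{cor} to $f\equiv 0$, so that~\eqref{non} holds with $c=0$ and~\eqref{C} becomes $q=0<1$, giving the required solution of~\eqref{lde} with the uniform constant $C=\bar D$. The extra remarks you add (compatibility of the degenerate data with the standing assumptions, uniformity of $C$ in $\delta$ and $y$) are accurate but not needed beyond what the paper states.
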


\begin{proof}
It sufficies to apply Corollary~\ref{cor} in the case when $f(t,x)=0$ for $t\ge 0$ and $x\in X$. Observe that in this case~\eqref{non} holds with $c=0$ and thus~\eqref{C} is obviously satisfied.
\end{proof}

\section{Examples}\label{sec: examples}
In this section we discuss a variety of settings to which our results are applicable. In addition, we discuss the relationship between Theorem~\ref{thm1} and the results already available in the literature.
\subsection{The case of exponential dichotomies}\label{ED}
This situation corresponds to the case when $P^3(t)=0$, $\mu(t)=e^t$ and $\nu(t)=1$, $t\ge 0$. Observe that in this setting, \eqref{non} reduces to
\[
|f(t,x)-f(t,y)| \le c|x-y|, \quad \text{for $t\ge 0$ and $x, y\in X$.}
\]
Moreover, \eqref{pseudo} is equivalent to
\[
|y'(t)-A(t)y(t)-f(t, y(t))| \le \delta \quad \text{for $t\ge 0$,}
\]
Hence, Corollary~\ref{cor} is a analogous to~\cite[Theorem 5]{BD1}  except that in the present paper, we consider dynamics on $[0, +\infty)$ instead of $\mathbb R$. We stress that the proof of~\cite[Theorem 5]{BD1} is done by passing from the continuous time dynamics to the associated discrete time dynamics, while in the present paper we developed a direct approach.

\subsection{The case of partial exponential dichotomies}
In this setting we continue to assume that  $\mu(t)=e^t$ and $\nu(t)=1$, $t\ge 0$. However, we don't require that $P^3(t)=0$. In this context, Theorem~\ref{thm1} represents a continuous time version of~\cite[Theorem 2.]{BD2}. We stress that in~\cite[Section 5.]{BD2}, a different continuous time version of~\cite[Theorem 2.]{BD2} (see~\cite[Theorem 4.]{BD2}) was proposed and again obtained by suitable discretization of time. However, Theorem~\ref{thm1} presents a more natural continuous time version of~\cite[Theorem 2.]{BD2} (when compared to~\cite[Theorem 4.]{BD2}), since $x$ given by Theorem~\ref{thm1} is a solution of~\eqref{nde} up to the movement in the central direction (i.e. along images of $P^3(t)$). We emphasize that this is precisely what corresponds to~\cite[Theorem 2.]{BD2} in the case of discrete time. 

We discuss a particular example to which Theorem~\ref{thm1} is applicable in this setting.
\begin{example}
Take $X=\mathbb R^3$ and set
\[
A(t)=\begin{pmatrix}
-(1+2t) & 0 & 0 \\
0 & 1+2t & 0 \\
0 & 0 & 0
\end{pmatrix}, \quad t\ge 0.
\]
Then, it is easy to show that~\eqref{d1} and~\eqref{d2} holds with $D=\lambda =1$ and
\[
P^1(x_1, x_2,x_3)=(x_1, 0, 0) \quad \text{and} \quad P^2(x_1,x_2, x_3)=(0, x_2, 0), 
\]
$(x_1,x_2,x_3)\in \mathbb R^3$. Take $c\ge 0$ and set
\[
f(t,x)=c\sin |x|, \quad (t,x)\in [0, +\infty)\times \mathbb R^3.
\]
Obviously, \eqref{non} holds. Hence, Theorem~\ref{thm1} is applicable whenever $c$ is sufficiently small.
\end{example}

\subsection{The case of tempered exponential dichotomies}
Here, we take $P^3(t)=0$ and $\mu(t)=e^t$, $t\ge 0$. In addition, we assume that $\nu$ is tempered, i.e. that 
\begin{equation}\label{temp}
\lim_{t\to +\infty} \frac 1 t \ln \nu (t)=0.
\end{equation}
Let us discuss a particular example.
\begin{example}
Take $X=\mathbb R$ and set
\[
A(t)=-1+\frac{\cos t}{\sqrt{1+t}}-\sqrt{1+t}\sin t, \quad t\ge 0.
\]
One can easily show that
\[
T(t,s)=e^{-(t-s)+\sqrt{1+t} \cos t-\sqrt{1+s}\cos s}, \quad t,s\ge 0.
\]
Obviously, 
\[
T(t,s)\le e^{-(t-s)+\sqrt{1+t} +\sqrt{1+s}}, \quad t\ge s\ge 0.
\]
Since 
\[
\sqrt{1+t}-\sqrt{1+s} \le \frac 1 2 (t-s) \quad t\ge s\ge 0,
\]
we have that 
\[
T(t,s) \le e^{-\frac 1 2(t-s)+2\sqrt{1+s}},  \quad t\ge s\ge 0.
\]
Therefore, \eqref{d1} and~\eqref{d2} hold with $\lambda=\frac 1 2$, $D=1$, $P^1(t)=\bbI$, $P^2(t)=0$, $d=2$ and $\nu(t)=e^{\sqrt{1+t}}$ (which obviously satisfies~\eqref{temp}). Take $c\ge 0$ and set
\[
f(t,x)=\frac{c}{\nu(t)^d} \sin x, \quad (t,x)\in [0, +\infty)\times \mathbb R.
\]
We see that~\eqref{non} holds. Again, Theorem~\ref{thm1} is applicable whenever $c$ is sufficiently small. 

In this context, Corollary~\ref{cor} is similar to the results established in~\cite{BD3}, although in that paper we dealt with random dynamics. 
\end{example}
\subsection{The case of nonuniform polynomial dichotomies}\label{PD}

Here we take $\mu(t)=\nu(t)=t+1$ and $P^3(t)=0$ for $t\ge 0$ (see~\cite{BS}). Hence, \eqref{non} reads as
\begin{equation}\label{pnon}
|f(t,x)-f(t,y)| \le \frac{c}{(1+t)^{d+1}}|x-y|, \quad \text{for $t\ge 0$ and $x, y\in X$,}
\end{equation}
while~\eqref{pseudo} means that 
\[
|y'(t)-A(t)y(t)-f(t, y(t))| \le \frac{\delta}{(1+t)^{d+1}} \quad \text{for $t\ge 0$.}
\]
To the best of our knowledge, in the present context, Corollary~\ref{cor} represents a first result discussing the shadowing property for~\eqref{nde} when~\eqref{lde} admits a polynomial dichotomy.

For the sake of completeness we discuss a concrete example in this setting to which Corollary~\ref{cor} is applicable. 

\begin{example}
Take $X=\mathbb R^2$,  $d>0$ and $a<0\le b$. Furthermore, for $t\ge 0$ set
\[
A(t)=\begin{pmatrix}
a(t) & 0 \\
0 & b(t)
\end{pmatrix},
\]
where
\[
a(t)=\frac{a}{t+1}+\frac{d}{2(t+1)}(\cos t-1)-\frac{d}{2} \ln (1+t)\sin t
\]
and 
\[
b(t)=\frac{b}{t+1}-\frac{d}{2(t+1)}(\cos t-1)+\frac{d}{2} \ln (1+t)\sin t .
\]
It is proved in~\cite[Example 2.1.]{BS} that~\eqref{d1} and~\eqref{d2} hold with $D=1$, $\lambda =\min \{-a, b\}>0$ and 
\[
P^1(x,y)=(x, 0) \quad \text{and} \quad P^2(x,y)=(0, y), \quad (x, y)\in \mathbb R^2.
\] 
Take $c\ge 0$ and set
\[
f(t,x)=\frac{c}{(1+t)^{d+1}}\sin |x|, \quad (t,x)\in [0, +\infty)\times \mathbb R^2.
\]
Clearly, \eqref{pnon} holds and thus  Corollary~\ref{cor} is applicable for $c$ sufficiently small.
\end{example}

\subsection{The case of $(\mu, \nu)$-dichotomies}\label{sec: mu nu dichotomies} 
The notion of $(\mu, \nu)$-dichotomy is introduced by Bento and Silva~\cite{BS1} and  corresponds to our general setting with $P^3(t)=0$. We observe that this setting includes the ones discussed in Subsections~\ref{ED} and~\ref{PD} as particular cases. 
We refer to~\cite{BS} for particular examples of this behaviour (which don't belong to the  classes  of examples discussed in Subsections~\ref{ED} and~\ref{PD}). Again, Corollary~\ref{cor} represents a first shadowing result for~\eqref{nde} in the case when~\eqref{lde} admits a  $(\mu, \nu)$-dichotomy.

\section{Equations of higher order}\label{sec: higher order}
The purpose of this section is to indicate that our results apply for differential equations of higher order.

  We take continuous functions $A, B \colon [0, +\infty) \to \mathcal B(X)$ and a continuous function $f\colon [0,+\infty)\times X\times X \to X$. We consider a nonlinear differential equation of second order given by
\begin{equation}\label{hon}
x''=A(t)x'+B(t)x+f(t, x', x) \quad t\ge 0,
\end{equation}
and the associated linear equation
\begin{equation}\label{huh}
x''=A(t)x'+B(t)x, \quad t\ge 0.
\end{equation}

Let $X^2=X\times X$. Then, $X^2$ is a Banach space with respect to the norm $|(x_1, x_2)|'=|x_1|+|x_2|$, $(x_1, x_2)\in X^2$. For $t\ge 0$, we define $C(t)\colon X^2\to X^2$ by
\[
C(t)(x_1,x_2)=(A(t)x_1+B(t)x_2, x_1), \quad (x_1, x_2)\in X^2.
\]
We consider a linear differential equation on $X^2$ given by
\begin{equation}\label{ldex}
z'=C(t)z.
\end{equation}
\begin{corollary}\label{cor2}
Assume that~\eqref{ldex} admits a $(\mu, \nu)$-dichotomy (see Section \ref{sec: mu nu dichotomies}). Furthermore, suppose that there exists $c\ge 0$ such that
\begin{equation}\label{NE}
|f(t,x_1,x_2)-f(t,x_1', x_2')| \le \frac{c\mu'(t)}{\mu(t)\nu(t)^d} (|x_1-x_1'|+|x_2-x_2'|), 
\end{equation}
for $t\ge 0$ and $(x_1,x_2), (x_1', x_2')\in X^2$. Then, for every $c$ sufficiently small, there exists $C>0$ with the property that for any $\delta >0$ and an arbitrary twice differentiable map $y\colon [0, +\infty)\to X$ satisfying
\begin{equation}\label{P}
\sup_{t\ge 0} |y''(t)-A(t)y'(t)-B(t)y(t)-f(t, y'(t), y(t))| \le  \frac{\delta \mu'(t)}{\mu(t)\nu(t)^d}, 
\end{equation}
there exists a solution $x\colon [0, +\infty) \to X$ of~\eqref{hon} such that 
\begin{equation}\label{aprox}
\sup_{t\ge 0} |x(t)-y(t)| \le C\delta.
\end{equation}
\end{corollary}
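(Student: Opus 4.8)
The plan is to reduce the second-order equation \eqref{hon} to a first-order semilinear equation on $X^2$ and then invoke Corollary~\ref{cor}. Introduce the new unknown $z=(x',x)$. With the operator $C(t)$ already defined, \eqref{hon} becomes $z'=C(t)z+F(t,z)$ on $X^2$, where $F\colon[0,+\infty)\times X^2\to X^2$ is given by $F(t,(x_1,x_2))=(f(t,x_1,x_2),0)$. By hypothesis, \eqref{ldex} admits a $(\mu,\nu)$-dichotomy, which is precisely the setting of Section~\ref{sec: prelim} for $z'=C(t)z$ with $P^3(t)=0$, the underlying norm on $X^2$ being $|\cdot|'$.

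First I would check that $F$ satisfies condition \eqref{non} on $(X^2,|\cdot|')$ with the same constant $c$: by \eqref{NE},
\[
|F(t,z)-F(t,w)|'=|f(t,x_1,x_2)-f(t,x_1',x_2')|\le \frac{c\mu'(t)}{\mu(t)\nu(t)^d}\big(|x_1-x_1'|+|x_2-x_2'|\big)=\frac{c\mu'(t)}{\mu(t)\nu(t)^d}|z-w|'.
\]
Hence, choosing $c$ small enough that $q=c(2D+1)+2cD\lambda^{-1}<1$ (with $D,\lambda$ the dichotomy constants for \eqref{ldex}), the equation $z'=C(t)z+F(t,z)$ satisfies all the hypotheses of Corollary~\ref{cor}.

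Next, given a twice differentiable $y$ satisfying \eqref{P}, set $w(t)=(y'(t),y(t))$, which is differentiable. A direct computation gives
\[
w'(t)-C(t)w(t)-F(t,w(t))=\big(y''(t)-A(t)y'(t)-B(t)y(t)-f(t,y'(t),y(t)),\ 0\big),
\]
so $|w'(t)-C(t)w(t)-F(t,w(t))|'\le \delta\mu'(t)/(\mu(t)\nu(t)^d)$ by \eqref{P}; that is, $w$ is a pseudo-solution in the sense of \eqref{pseudo}. Corollary~\ref{cor} then yields $C>0$ and a solution $z=(z_1,z_2)$ of $z'=C(t)z+F(t,z)$ with $\sup_{t\ge 0}|z(t)-w(t)|'\le C\delta$. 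Finally I would set $x(t)=z_2(t)$. Unravelling the definitions of $C$ and $F$ gives $z_2'=z_1$ and $z_1'=A(t)z_1+B(t)z_2+f(t,z_1,z_2)$; the first relation shows $x$ is twice differentiable with $x'=z_1$, and then the second gives $x''=A(t)x'+B(t)x+f(t,x',x)$, so $x$ solves \eqref{hon}. Moreover $|x(t)-y(t)|=|z_2(t)-y(t)|\le |z(t)-w(t)|'\le C\delta$, which is \eqref{aprox}.

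Every step is routine, so there is no genuine obstacle; the only point demanding a little care is organising the reduction so that the norm $|\cdot|'$ on $X^2$ is exactly the one under which both the $(\mu,\nu)$-dichotomy and the Lipschitz bound \eqref{NE} are stated, so that Corollary~\ref{cor} applies verbatim. I would also record that ``$c$ sufficiently small'' means precisely $c<\big((2D+1)+2D\lambda^{-1}\big)^{-1}$, with $D,\lambda$ the constants in the $(\mu,\nu)$-dichotomy of \eqref{ldex}.
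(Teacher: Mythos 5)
Your proposal is correct and follows essentially the same route as the paper: reduce \eqref{hon} to the first-order system $z'=C(t)z+g(t,z)$ on $(X^2,|\cdot|')$ with $g(t,x_1,x_2)=(f(t,x_1,x_2),0)$, verify \eqref{non} and \eqref{pseudo} for $w=(y',y)$, and apply Corollary~\ref{cor} with $P^3(t)=0$. Your extra remarks (unravelling $z_2'=z_1$ to confirm $x=z_2$ solves \eqref{hon}, and the explicit smallness threshold for $c$) are just slightly more detailed versions of steps the paper leaves implicit.
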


\begin{proof}
We define $w\colon [0, +\infty)\to X^2$ by
\[
w(t)=(y'(t), y(t)), \quad t\ge 0.
\]
Furthermore, we define $g\colon [0, +\infty)\times X^2\to X^2$ by
\[
g(t,x_1, x_2)=(f(t,x_1, x_2), 0), \quad (x_1, x_2)\in X^2.
\]
By~\eqref{NE}, we have that 
\begin{equation}\label{NE1}
|g(t,x_1,x_2)-g(t,x_1', x_2')|' \le \frac{c\mu'(t)}{\mu(t)\nu(t)^d} |(x_1, x_2)-(x_1', x_2')|',
\end{equation}
for $t\ge 0$ and $(x_1,x_2), (x_1', x_2')\in X^2$.  
Moreover, it follows from~\eqref{P} that 
\begin{equation}\label{P1}
\sup_{t\ge 0} |w'(t)-C(t)w(t)-g(t,w(t))|' \le \frac{\delta \mu'(t)}{\mu(t)\nu(t)^d}
\end{equation}
Then, for $c$ sufficiently small, we can apply Corollary~\ref{cor} and conclude that there exists a differentiable map  $z=(z_1, z_2)\colon [0, +\infty)\to X^2$ such that 
\begin{equation}\label{z}
z'(t)=C(t)z(t)+g(t, z(t))\quad t\ge 0,
\end{equation}
and
\begin{equation}\label{ij}
\sup_{t\ge 0} |z(t)-w(t)|'\le C\delta,
\end{equation}
where $C>0$ comes from Corollary~\ref{cor}. From~\eqref{z}, it follows easily that $x:=z_2$ is a solution of~\eqref{hon}. Moreover, \eqref{ij} implies that~\eqref{aprox} holds. The proof is completed. 
\end{proof}

\begin{remark}
In the particular case when $X=\mathbb C$, $\mu(t)=e^t$, $\nu(t)=1$, $t\ge 0$ and with $A$ and $B$ being periodic, the result similar  to Corollary~\ref{cor2} was obtained in~\cite[Theorem 2.12.]{BLR}. Indeed, the only difference is that in~\cite{BLR}, the authors consider dynamics on $\mathbb R$ while we consider dynamics on $[0, +\infty)$.
\end{remark}

\begin{remark}
In Corollary~\ref{cor2}, we have for the sake of simplicity focused on equations of order $2$. However, it is clear that an analogous result can be formulated for equations of any order.
\end{remark}

\section{The case of discrete time}\label{sec: discrete}
In this section we present a discrete time version of Theorem \ref{thm1}. We start by introducing some useful notations. Let $(X, |\cdot |)$ and $(\mathcal B(X),\|\cdot\|)$ be as in Section \ref{sec: prelim}. We denote by $\bbN$ the set of all natural numbers, including $0$, while $\bbN^\ast$ denotes the set of all natural numbers excluding $0$. Given a sequence $(A_n)_{n\in \mathbb{N}}$ of bounded linear operators in $\mathcal B(X)$ and $m, n\in \mathbb{N}$, let us consider 
\[
\cA (m, n)=\begin{cases}
A_{m-1}\cdots A_n & \text{for $m>n$;}\\
\bbI  &\text{for $m=n$.} \\
\end{cases}
\]

Let $ \mu =(\mu_n)_{n\in \mathbb{N}}$ be a strictly  increasing sequence such that 
\[\mu_0\geq 1 \quad  \text{and} \quad \lim_{n\to +\infty}\mu_n=+\infty.\]
Moreover, let $ \nu =(\nu_n)_{n\in \mathbb{N}}$ be an arbitrary sequence with $\nu _n \geq 1$ for every $n\in \bbN$.

All along this section we are going to assume that the following conditions are satisfied:
\begin{enumerate}
\item  there exist three families of projections $P^i_n$, $n\in \mathbb{N}$, $i\in \{1, 2, 3\}$ such that 
\[
P^1_n+P^2_n+P^3_n=\bbI, \quad n\in \bbN;
\]
\item for $i, j\in \{1, 2,3 \}$, $i\neq j$ and $n\in \bbN$, 
\[
P^i_nP^j_n=0;
\]
\item for every $n\in \bbN$ and $i\in \{1, 2, 3\}$, we have that 
\[
A_nP^i_n=P^i_{n+1}A_n;
\]
\item $A_n\rvert_{\Ima P^2_n} \colon \Ima P^2_n \to \Ima P^2_{n+1}$ is an invertible operator for each $n\in \bbN$;
\item there exist $D, \lambda >0$ and $d\ge 0$  such that 
\begin{equation}\label{eq: Es discrete}
\|\cA(m,n)P_n^1\|\leq D\left(\frac{\mu_m}{\mu_n}\right)^{-\lambda}\nu_n^d \quad \text{ for } m\geq n
\end{equation}
and
\begin{equation}\label{eq: Eu discrete}
\|\cA(m,n)P_n^2\|\leq D\left(\frac{\mu_n}{\mu_m}\right)^{-\lambda}\nu_n^d \quad \text{ for } m\leq n
\end{equation}
where 
\[
\cA(m, n):=\big{(}\cA(n, m)\rvert_{\Ima P^2_m} \big{)}^{-1} \colon \Ima P^2_n \to \Ima P^2_m,
\]
for $m\le n$. 

\end{enumerate}

For $n\in \bbN$, set 
\[
E^s_n=\Ima P^1_n, \ E^u_n=\Ima P^2_n \ \text{and} \ E^c_n=\Ima P^3_n.
\]
Furthermore, we set
\[
|x|_n:= \sup_{m\ge n} \bigg (|\cA(m,n)x|  \bigg (\frac{\mu_m}{\mu _n} \bigg )^{\lambda}   \bigg) \quad \text{for $n\in \bbN$ and $x\in E^s_n$,}
\]
\[
|x|_n:= \sup_{m\le n} \bigg (|\cA(m,n)x| \bigg (\frac{\mu_n}{\mu_m} \bigg )^{\lambda} \bigg) \quad \text{for $n\in \bbN$ and $x\in E^u_n$,}
\]
and 
\[
|x|_n:= \left(\frac{\mu^{\lambda}_{n+1}}{\mu^{\lambda}_{n+1}-\mu_n^\lambda }\right)|x|, \quad \text{for $n\in \bbN$ and $x\in E^c_n$.}
\]

Finally, for an arbitrary $x\in X$ let 
\[
|x|_n:= |P^1_nx|_n+|P^2_nx|_n+|P^3_nx|_n.
\]
It is easy to see that $|\cdot |_n$ is a norm on $X$ for every $n\in \bbN$. Moreover,  by repeating the arguments  in the proof of Lemma \ref{lemma: comparation norms}, one can establish the following result.
\begin{lemma} \label{lemma: comparation norms discrete}
The following properties hold:
\begin{itemize}
\item for $n\in \bbN$ and $x\in E^s_n\cup E^u_n$, 
\begin{equation}\label{ln disc}
|x|\le |x|_n ;
\end{equation}
\item for $n\ge m$ and $x\in X$, 
\begin{equation}\label{ln1 disc}
|\cA(n,m)P^1_mx|_n \le D \bigg (\frac{\mu_n}{\mu _m} \bigg )^{-\lambda} \nu_m^d|x|;
\end{equation}
\item for $n\le m$ and $x\in X$, 
\begin{equation}\label{ln2 disc}
|\cA(n,m)P^2_mx|_n \le D \bigg (\frac{\mu_m}{\mu_n} \bigg )^{-\lambda}\nu_m^d|x|.
\end{equation}
\end{itemize}
\end{lemma}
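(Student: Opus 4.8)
The statement is the discrete-time analogue of Lemma~\ref{lemma: comparation norms}, and, as the text preceding it indicates, the plan is to transcribe that proof line by line: replace the evolution family $T(t,s)$ by the discrete evolution $\cA(m,n)$, the continuous time variables by natural numbers, and the dichotomy bounds \eqref{d1}--\eqref{d2} by \eqref{eq: Es discrete}--\eqref{eq: Eu discrete}. Since the three assertions concern vectors in $E^s_n$, in $E^u_n$, and arbitrary $x\in X$ respectively, the central direction $E^c_n$ and its norm play no role. First I would dispose of \eqref{ln disc}: by definition, for $x\in E^s_n$ the quantity $|x|_n$ is a supremum over $m\ge n$ of $|\cA(m,n)x|(\mu_m/\mu_n)^\lambda$, and the term $m=n$ equals $|\cA(n,n)x|=|x|$ because $\cA(n,n)=\bbI$; hence $|x|\le|x|_n$. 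The case $x\in E^u_n$ is identical, taking $m=n$ in the supremum over $m\le n$. No computation is needed.

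For \eqref{ln1 disc} with $n\ge m$, I would write $|\cA(n,m)P^1_mx|_n=\sup_{k\ge n}\big(|\cA(k,n)\cA(n,m)P^1_mx|(\mu_k/\mu_n)^\lambda\big)$, apply the cocycle identity $\cA(k,n)\cA(n,m)=\cA(k,m)$ valid for $k\ge n\ge m$, factor out $(\mu_n/\mu_m)^{-\lambda}$ using $(\mu_k/\mu_n)^\lambda=(\mu_k/\mu_m)^\lambda(\mu_n/\mu_m)^{-\lambda}$, enlarge the supremum from $k\ge n$ to $k\ge m$, and finally bound $|\cA(k,m)P^1_mx|(\mu_k/\mu_m)^\lambda\le D\nu_m^d|x|$ by \eqref{eq: Es discrete}. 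The claim \eqref{ln2 disc} for $n\le m$ is handled symmetrically: here $\cA(n,m)$ is the restricted inverse sending $\Ima P^2_m$ to $\Ima P^2_n$, so $\cA(n,m)P^2_mx$ is well defined; one uses the cocycle identity $\cA(k,n)\cA(n,m)=\cA(k,m)$ for $k\le n\le m$, factors out $(\mu_m/\mu_n)^{-\lambda}$ via $(\mu_n/\mu_k)^\lambda=(\mu_m/\mu_k)^\lambda(\mu_m/\mu_n)^{-\lambda}$, enlarges the range from $k\le n$ to $k\le m$, and concludes with \eqref{eq: Eu discrete}.

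The only point requiring more than a mechanical translation — and the one place the discrete setting genuinely differs from the continuous one — is the cocycle identity on the unstable subspaces when the indices are not monotone in the forward sense, i.e. $\cA(k,n)\cA(n,m)=\cA(k,m)$ for $k\le n\le m$. This follows from assumption~(4) (the invertibility of $A_n\rvert_{\Ima P^2_n}$) together with the definition of $\cA(m,n)$ for $m\le n$ given above, exactly as in the standard construction of a two-sided evolution on a hyperbolic subbundle; I would record it in one line before running the estimates. I do not expect any real obstacle beyond this bookkeeping: once the identity is in place, the three chains of inequalities are routine and the proof is complete.
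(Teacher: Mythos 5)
Your proposal is correct and follows exactly the route the paper intends: the paper gives no separate proof of this lemma, stating only that it follows ``by repeating the arguments in the proof of Lemma~\ref{lemma: comparation norms}'', and your transcription (the $m=n$ term of the supremum for \eqref{ln disc}, the cocycle identity plus factoring of $(\mu_n/\mu_m)^{\pm\lambda}$ and enlargement of the supremum for \eqref{ln1 disc} and \eqref{ln2 disc}) is precisely that adaptation. Your remark that the unstable-direction cocycle identity $\cA(k,n)\cA(n,m)=\cA(k,m)$ for $k\le n\le m$ rests on assumption (4) and the definition of $\cA(m,n)$ for $m\le n$ is a worthwhile point to make explicit, and it is the only place where the discrete setting requires any care.
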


\subsection{Main result for the case of discrete time} Let $f_n:X\to X$ be a sequence of maps and suppose there exist $c\ge 0$ such that  
\begin{equation}\label{non disc} 
|f_n(x)-f_n(y)| \leq c\left(\frac{\mu^{\lambda}_{n+1}-\mu_n^\lambda}{\mu_{n+1}^\lambda \nu_n^d}\right)|x-y|, \quad \text{for $n\in \bbN$ and $x, y\in X$.}
\end{equation}

The next result is a version of Theorem \ref{thm1} in the present setting.

\begin{theorem}\label{thm2}
Suppose that 
\begin{equation}\label{C disc}
q:=c(4D+1) <1.
\end{equation}
Then, there exists $C>0$ with the property that for any $\delta >0$ and an arbitrary sequence $\mathbf{y}=(y_n)_{n\in \bbN}\subset X$ satisfying
\begin{equation}\label{pseudo disc}
|y_{n+1}-A_ny_n-f_n(y_n)| \leq \delta \left(\frac{\mu^{\lambda}_{n+1}-\mu_n^\lambda}{\mu_{n+1}^\lambda \nu_n^d}\right) \quad \text{for every $n\in \bbN$,}
\end{equation}
there exists a sequence $\mathbf{x}=(x_n)_{n\in \bbN}\subset X$ such that:
\begin{itemize}
\item $P^3_n(x_n-y_n)=0$ for every $n\in \bbN$;
\item $\sup_{n\in \bbN} |x_n-y_n| \le C\delta$;
\item $x_{n+1}-A_nx_n-f_n(x_n)\in \Ima P^3_n$ for every $n\in \bbN$;
\item for every $n\in \mathbb N$,
\begin{equation}\label{EST2}
|x_{n+1}-A_n x_n-f_n(x_n)|  \le  C\delta (2D+1)\nu_n^d.
\end{equation}
\end{itemize}
\end{theorem}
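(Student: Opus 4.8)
The plan is to mimic the proof of Theorem~\ref{thm1}, replacing integrals by sums throughout. First I would work in the Banach space
\[
\mathcal Y:=\Big\{\mathbf z=(z_n)_{n\in\bbN}\subset X:\ \|\mathbf z\|_\infty:=\sup_{n\in\bbN}|z_n|_n<+\infty\Big\},
\]
and, for $\mathbf z\in\mathcal Y$, put $H_n(\mathbf z):=A_ny_n+f_n\big(y_n+(\bbI-P^3_n)z_n\big)-y_{n+1}$, the discrete analogue of $A(t)y(t)+f(t,y(t)+(\bbI-P^3(t))z(t))-y'(t)$ from the proof of Theorem~\ref{thm1}. I would then define $\mathcal T\colon\mathcal Y\to\mathcal Y$ by
\[
(\mathcal T\mathbf z)_n:=-P^3_nH_n(\mathbf z)+\sum_{k=0}^{n-1}\cA(n,k+1)P^1_{k+1}H_k(\mathbf z)-\sum_{k=n}^{\infty}\cA(n,k+1)P^2_{k+1}H_k(\mathbf z),
\]
the backward sum being meaningful because of the invertibility of $A_n|_{\Ima P^2_n}$, and the first summand serving — just as in Theorem~\ref{thm1} — to absorb the central component of the eventual defect.

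Next I would check that $\mathcal T$ is a contraction on a ball. Combining Lemma~\ref{lemma: comparation norms discrete} with~\eqref{pseudo disc} (to bound $\mathcal T\mathbf 0$, where $H_k(\mathbf 0)=A_ky_k+f_k(y_k)-y_{k+1}$) and with~\eqref{non disc} (to bound $\mathcal T\mathbf z_1-\mathcal T\mathbf z_2$), all the relevant estimates reduce to the elementary telescoping bounds
\[
\sum_{k=0}^{n-1}\Big(\tfrac{\mu_n}{\mu_{k+1}}\Big)^{-\lambda}\frac{\mu_{k+1}^{\lambda}-\mu_k^{\lambda}}{\mu_{k+1}^{\lambda}}=1-\frac{\mu_0^{\lambda}}{\mu_n^{\lambda}}\le1,\qquad\sum_{k=n}^{\infty}\Big(\tfrac{\mu_{k+1}}{\mu_n}\Big)^{-\lambda}\frac{\mu_{k+1}^{\lambda}-\mu_k^{\lambda}}{\mu_{k+1}^{\lambda}}\le1,
\]
which are the discrete counterparts of $\int_0^t(\mu(t)/\mu(s))^{-\lambda}\frac{\mu'(s)}{\mu(s)}\,ds\le\lambda^{-1}$. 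Because these sums are bounded by $1$ rather than by $\lambda^{-1}$, one gets $\|\mathcal T\mathbf 0\|_\infty\le\bar D\delta$ and $\|\mathcal T\mathbf z_1-\mathcal T\mathbf z_2\|_\infty\le q\|\mathbf z_1-\mathbf z_2\|_\infty$ with $\bar D:=4D+1$ (stable sum $D$, unstable sum $D$, central term $2D+1$) and $q=c(4D+1)$ as in~\eqref{C disc} (stable sum $Dc$, unstable sum $Dc$, central term $c(2D+1)$); the central estimates use the normalization $|x|_n=\frac{\mu_{n+1}^{\lambda}}{\mu_{n+1}^{\lambda}-\mu_n^{\lambda}}|x|$ on $E^c_n$. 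Setting $C:=\bar D/(1-q)$, the ball $\mathcal B:=\{\mathbf z\in\mathcal Y:\|\mathbf z\|_\infty\le C\delta\}$ is invariant under $\mathcal T$, which is therefore a contraction on $\mathcal B$ and has a unique fixed point $\mathbf z\in\mathcal B$.

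Then I would unpack the fixed point. With $\bar z_n:=(\bbI-P^3_n)z_n$ and $x_n:=y_n+\bar z_n$, the fixed-point identity gives that $\bar z_n$ equals the two sums in the definition of $\mathcal T$; telescoping them produces a recursion $\bar z_{n+1}=A_n\bar z_n+(\text{a term with no central part})$, and substituting this into $x_{n+1}-A_nx_n-f_n(x_n)$ together with $H_n(\mathbf z)=A_ny_n+f_n(y_n+\bar z_n)-y_{n+1}$ shows that this defect equals $-P^3H_n(\mathbf z)$ (suitably indexed), so it lies in the central direction; moreover $P^3_n(x_n-y_n)=P^3_n(\bbI-P^3_n)z_n=0$. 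The bound $\sup_n|x_n-y_n|\le C\delta$ follows from $|\bar z_n|\le|P^1_nz_n|_n+|P^2_nz_n|_n\le|z_n|_n\le\|\mathbf z\|_\infty$ by~\eqref{ln disc}, and~\eqref{EST2} follows by estimating the central defect by $(2D+1)\nu^d$ times its unweighted norm, exactly as at the end of the proof of Theorem~\ref{thm1}; the new feature is that the above normalization on $E^c_n$ yields $|z_n|\le|z_n|_n$ with no extra constant, which is why~\eqref{EST2}, in contrast with~\eqref{EST}, carries no ``$\max\{1,\mu'/\mu\}$''-type term.

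The hard part, compared with Theorem~\ref{thm1}, is the bookkeeping in the discrete variation-of-constants formula: one must organize the backward (unstable) sum around the invertible restrictions $A_n|_{\Ima P^2_n}$, deal with the possible non-invertibility of the forward dynamics on $\Ima P^1_n$, and — most delicately — keep the nonuniformity weights $\nu^d$ attached to the right indices when composing~\eqref{ln1 disc}, \eqref{ln2 disc} with~\eqref{non disc} and~\eqref{pseudo disc} inside the geometric sums, so that the $\nu$-factors cancel and the sums collapse to the telescoping bounds above. Once the operator is arranged so that these cancellations hold, the remaining computations are straightforward discrete transcriptions of those in the proof of Theorem~\ref{thm1}.
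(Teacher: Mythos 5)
Your plan is structurally the same as the paper's (define the Banach space $\mathcal Y$ with the Lyapunov norms $|\cdot|_n$, set up a Lyapunov--Perron-type operator $\mathcal T$, show contraction plus ball invariance, extract $\bar z_n=(\bbI-P^3_n)z_n$, set $x_n=y_n+\bar z_n$, and read off the conclusions). The difference is your choice of indices in $\mathcal T$, and it creates a genuine problem. You take
\[
(\mathcal T\mathbf z)_n=-P^3_nH_n(\mathbf z)+\sum_{k=0}^{n-1}\cA(n,k+1)P^1_{k+1}H_k(\mathbf z)-\sum_{k=n}^{\infty}\cA(n,k+1)P^2_{k+1}H_k(\mathbf z),
\]
whereas the paper uses $\cA(n,m)P^1_mH_m(\mathbf z)$ and $\cA(n,m)P^2_mH_m(\mathbf z)$, i.e.\ it pairs the dichotomy estimate at index $m$ with the term $H_m$ at the \emph{same} index $m$. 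That pairing is what makes the nonuniformity weights cancel: $|\cA(n,m)P^1_m H_m|_n\le D(\mu_n/\mu_m)^{-\lambda}\nu_m^d\cdot\delta\,\frac{\mu_{m+1}^\lambda-\mu_m^\lambda}{\mu_{m+1}^\lambda\nu_m^d}$, and the two $\nu_m^d$ cancel exactly. With your shift, \eqref{ln1 disc} applied to $\cA(n,k+1)P^1_{k+1}$ produces $\nu_{k+1}^d$, while $|H_k|$ carries $\nu_k^{-d}$, so you are left with the ratio $\nu_{k+1}^d/\nu_k^d$ inside the sum. Since no growth condition whatsoever is imposed on the sequence $(\nu_n)_n$, this ratio can be unbounded (take e.g.\ $\nu_n=2^{2^n}$), and the estimates for $\|\mathcal T\mathbf 0\|_\infty$ and the Lipschitz constant of $\mathcal T$ do not close. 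Your telescoping identities for the $\mu$-factors are correct (in fact slightly cleaner than the paper's, giving equality rather than an inequality), but they are not the delicate point; the $\nu$-cancellation is, and your operator does not achieve it. You flag this issue yourself at the end (``keep the nonuniformity weights attached to the right indices\ldots so that the $\nu$-factors cancel''), but the operator you actually wrote down is precisely the one for which they do not cancel.

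A smaller, related point: with your operator the telescoped recursion is $\bar z_{n+1}-A_n\bar z_n=(P^1_{n+1}+P^2_{n+1})H_n$, hence the defect equals $-P^3_{n+1}H_n\in\Ima P^3_{n+1}$, whereas the theorem asserts membership in $\Ima P^3_n$. This is a second consequence of the same index shift. To repair the argument you should replace the summands by $\cA(n,m)P^1_mH_m$ and $\cA(n,m)P^2_mH_m$ (and the lead term by $-P^3_{n-1}H_{n-1}$, with $(\mathcal T\mathbf z)_0=0$), as the paper does; the $\nu$'s then cancel index by index and the rest of your outline goes through essentially verbatim.
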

\begin{remark}
As its continuous time counterpart, Theorem \ref{thm2} may be seen as a version of the shadowing property for the nonlinear dynamics
\begin{displaymath}
x_{n+1}=A_nx_n+f_n(x_n),\quad n\in \bbN.
\end{displaymath}
\end{remark}

\begin{remark}
We emphasize that we don't require operators $A_n$ to be invertible. Indeed, it is only assumed that $A_n\rvert_{E_n^u} \colon  E_n^u \to E_{n+1}^u$ is invertible for each $n\in \mathbb N$.
\end{remark}
\begin{proof}[Proof of  Theorem~\ref{thm2}]
We follow closely the proof of Theorem~\ref{thm1}.

Let
\[
\mathcal Y:=\bigg \{ \mathbf{z}=(z_n)_{n\in \mathbb N} \subset X:  \ \|\mathbf{z}\|_\infty:=\sup_{n\in \bbN} |z_n|_n <+\infty \bigg \}.
\]
Then, $(\mathcal Y, \|\cdot \|_\infty)$ is a Banach space. For $\mathbf z\in \mathcal Y$, we set $(\mathcal T \mathbf{z})_0=0$ and 
\[
\begin{split}
(\mathcal T \mathbf{z})_n &=-P^3_{n-1}(A_{n-1} y_{n-1}+f_{n-1}(y_{n-1}+(\bbI-P^3_{n-1})z_{n-1})-y_{n}) \\
&\phantom{=}+\sum _{m=0}^{n-1} \cA(n,m) P^1_m(A_my_m+f_m(y_m+(\bbI-P^3_m)z_m)-y_{m+1}) \\
&\phantom{=}-\sum_{m=n}^{+\infty} \cA(n,m)P^2_m(A_my_m+f_m(y_m+(\bbI-P^3_m)z_m)-y_{m+1})
\end{split}
\]
for all $n\in \bbN^\ast$.

We start observing that it follows from~\eqref{ln1 disc} and~\eqref{pseudo disc} that 
\[
\begin{split}
 & \bigg |\sum _{m=0}^{n-1} \cA(n,m) P^1_m(A_my_m+f_m(y_m)-y_{m+1}) \bigg |_n \\
&\le \sum _{m=0}^{n-1} |\cA(n,m) P^1_m(A_my_m+f_m(y_m)-y_{m+1})|_n
\\
&\le D\sum _{m=0}^{n-1} \bigg (\frac{\mu_n}{\mu_m} \bigg )^{-\lambda} \nu_m^d|A_my_m+f_m(y_m)-y_{m+1}| \\
&\le D\delta \sum _{m=0}^{n-1} \bigg (\frac{\mu_n}{\mu_m} \bigg )^{-\lambda} \left(\frac{\mu^{\lambda}_{m+1}-\mu_m^\lambda}{\mu_{m+1}^\lambda}\right)  \\
&=D\delta \frac{1}{\mu_n^\lambda}\sum _{m=0}^{n-1} \frac{\mu^{\lambda}_{m+1}-\mu_m^\lambda}{\mu_m^{-\lambda}\mu_{m+1}^\lambda}\\
&\leq D\delta \frac{1}{\mu_n^\lambda}\sum _{m=0}^{n-1} \left(\mu^{\lambda}_{m+1}-\mu_m^\lambda\right) \\
&\leq D\delta \bigg (1-\frac{1}{ \mu_n^\lambda} \bigg ),
\end{split}
\]
for $n\in \mathbb N^*$ and thus
\begin{equation}\label{01 disc}
\sup_{n\in \bbN^\ast} \bigg |\sum _{m=0}^{n-1} \cA(n,m) P^1_m(A_my_m+f_m(y_m)-y_{m+1}) \bigg |_n   \le D\delta.
\end{equation}

Similarly, using~\eqref{ln2 disc} and~\eqref{pseudo disc} we have that 
\[
\begin{split}
& \bigg | \sum_{m=n}^{+\infty} \cA(n,m)P^2_m(A_my_m+f_m(y_m)-y_{m+1}) \bigg |_n \\
&\le D \sum_{m=n}^{+\infty} \bigg (\frac{\mu_m}{\mu_n} \bigg )^{-\lambda} \nu_m^d |A_my_m+f_m(y_m)-y_{m+1} | \\
&\le D\delta \sum_{m=n}^{+\infty} \bigg (\frac{\mu_m}{\mu_n} \bigg )^{-\lambda} \left(\frac{\mu^{\lambda}_{m+1}-\mu_m^\lambda}{\mu_{m+1}^\lambda}\right)  \\
&= D\delta \mu_n^\lambda \sum_{m=n}^{+\infty}  \left(\frac{\mu^{\lambda}_{m+1}-\mu_m^\lambda}{\mu_m^\lambda \mu_{m+1}^\lambda}\right)  \\
&= D\delta \mu_n^\lambda \sum_{m=n}^{+\infty}  \left(\frac{1}{\mu_m^\lambda}-\frac{1}{\mu_{m+1}^\lambda}\right)  \\
&=D\delta,
\end{split}
\]
and consequently 
\begin{equation}\label{02 disc}
\sup_{n\in \bbN}  \bigg | \sum_{m=n}^{+\infty} \cA(n,m)P^2_m(A_my_m+f_m(y_m)-y_{m+1}) \bigg |_n \le D\delta.
\end{equation}
Moreover, we have 
\begin{equation}\label{aux disc}
\begin{split}
& |P^3_n(A_ny_n+f_n(y_n)-y_{n+1}) |_n \\
&=\left(\frac{\mu^{\lambda}_{n+1}}{\mu_{n+1}^\lambda-\mu_n^\lambda }\right) |P^3_n(A_ny_n+f_n(y_n)-y_{n+1}) | \\
&\le \left(\frac{\mu^{\lambda}_{n+1}}{\mu_{n+1}^\lambda-\mu_n^\lambda }\right) (2D+1)\nu_n^d |A_ny_n+f_n(y_n)-y_{n+1}|,
\end{split}
\end{equation}
and therefore 
\begin{equation}\label{03 disc}
\sup_{n\in \bbN} |P^3_n(A_ny_n+f_n(y_n)-y_{n+1}) |_n \le \delta (2D+1).
\end{equation}
Observe that in~\eqref{aux disc}, we have used (see~\eqref{eq: Es discrete} and~\eqref{eq: Eu discrete}) that 
\[
|P^3_nx| \le |x|+|P^1_nx|+|P^2_nx|  \le |x|+2D\nu_n^d |x| \le (2D+1)\nu_n^d |x|,
\]
for every $n\in \bbN$ and $x\in X$. Combining~\eqref{01 disc}, \eqref{02 disc} and~\eqref{03 disc}, we conclude  that
\begin{equation}\label{T0 disc}
\|\mathcal T \mathbf{0}\|_\infty \le \bar D\delta,
\end{equation}
where
\[
\bar D:=4D+1>0.
\]

Now, let $\mathbf{z}=(z_n)_{n\in \mathbb N}, \mathbf{w}=(w_n)_{n\in \mathbb N}\in \mathcal Y$ be arbitrary. By~\eqref{non disc}, \eqref{ln disc} and~\eqref{ln1 disc}, we have that 
\[
\begin{split}
& \bigg |\sum _{m=0}^{n-1} \cA(n,m) P^1_m(f_m(y_m+(\bbI-P^3_m)z_m)-f_m(y_m+(\bbI-P^3_m)w_m) )\bigg |_n \\
&\le D \sum _{m=0}^{n-1} \bigg (\frac{\mu_n}{\mu_m} \bigg )^{-\lambda}\nu_m^d |f_m(y_m+(\bbI-P^3_m)z_m)-f_m(y_m+(\bbI-P^3_m)w_m)|  \\
&\le Dc\sum _{m=0}^{n-1} \bigg (\frac{\mu_n}{\mu_m} \bigg )^{-\lambda} \left(\frac{\mu^{\lambda}_{m+1}-\mu_m^\lambda}{\mu_{m+1}^\lambda}\right)  |(\bbI-P^3_m)z_m-(\bbI-P^3_m)w_m| \\
&\le  Dc\sum _{m=0}^{n-1} \bigg (\frac{\mu_n}{\mu_m} \bigg )^{-\lambda} \left(\frac{\mu^{\lambda}_{m+1}-\mu_m^\lambda}{\mu_{m+1}^\lambda}\right) (|P^1_m(z_m-w_m)|+|P^2_m(z_m-w_m)|) \\
&\le  Dc\sum _{m=0}^{n-1} \bigg (\frac{\mu_n}{\mu_m} \bigg )^{-\lambda} \left(\frac{\mu^{\lambda}_{m+1}-\mu_m^\lambda}{\mu_{m+1}^\lambda}\right)  (|P^1_m(z_m-w_m)|_m+|P^2_m(z_m-z_m)|_m) \\
&\le Dc \frac{1}{\mu_n^\lambda}\sum _{m=0}^{n-1} \left(\frac{\mu^{\lambda}_{m+1}-\mu_m^\lambda}{\mu_m^{-\lambda}\mu_{m+1}^\lambda}\right) |z_m-w_m|_m \\
&\le Dc \frac{1}{\mu_n^\lambda}\sum _{m=0}^{n-1} \left(\mu^{\lambda}_{m+1}-\mu_m^\lambda\right) \|\mathbf{z}-\mathbf{w}\|_\infty \\
&\le Dc  \|\mathbf{z}-\mathbf{w}\|_\infty,
\end{split}
\]
and thus 
\begin{equation}\label{l1 disc}
\begin{split}
& \sup_{n\in \bbN ^\ast}\bigg |\sum _{m=0}^{n-1} \cA(n,m) P^1_m(f_m(y_m+(\bbI-P^3_m)z_m)-f_m(y_m+(\bbI-P^3_m)w_m)\bigg |_n \\
&\le Dc \|\mathbf{z}-\mathbf{w}\|_\infty. 
\end{split}
\end{equation}

Similarly, 
\[
\begin{split}
& \bigg |\sum_{m=n}^{+\infty} \cA(n,m)P^2_m(f_m(y_m+(\bbI-P^3_m)z_m)-f_m(y_m+(\bbI-P^3_m)w_m)) \bigg |_n \\
&\le D \sum_{m=n}^{+\infty} \bigg (\frac{\mu_m}{\mu_n} \bigg )^{-\lambda}\nu_m^d |f_m(y_m+(\bbI-P^3_m)z_m)-f_m(y_m+(\bbI-P^3_m)w_m)| \\
&\le Dc\sum_{m=n}^{+\infty} \bigg (\frac{\mu_m}{\mu_n} \bigg )^{-\lambda} \left(\frac{\mu^{\lambda}_{m+1}-\mu_m^\lambda}{\mu_{m+1}^\lambda}\right) |(\bbI-P^3_m)z_m-(\bbI-P^3_m)w_m| \\
&\le  Dc\sum_{m=n}^{+\infty} \bigg (\frac{\mu_m}{\mu_n} \bigg )^{-\lambda} \left(\frac{\mu^{\lambda}_{m+1}-\mu_m^\lambda}{\mu_{m+1}^\lambda}\right) (|P^1_m(z_m-w_m)|+|P^2_m(z_m-w_m)|) \\
&\le  Dc\sum_{m=n}^{+\infty} \bigg (\frac{\mu_m}{\mu_n} \bigg )^{-\lambda} \left(\frac{\mu^{\lambda}_{m+1}-\mu_m^\lambda}{\mu_{m+1}^\lambda}\right) (|P^1_m(z_m-w_m)|_m+|P^2_m(z_m-w_m)|_m) \\
&\le Dc \mu_n^\lambda \sum_{m=n}^{+\infty} \left(\frac{\mu^{\lambda}_{m+1}-\mu_m^\lambda}{\mu_m^{\lambda}\mu_{m+1}^\lambda}\right) |z_m-w_m|_m \\
&\le Dc \mu_n^\lambda \sum_{m=n}^{+\infty} \left(\frac{1}{\mu_m^{\lambda}}-\frac{1}{\mu_{m+1}^\lambda}\right) \|\mathbf{z}-\mathbf{w}\|_\infty \\
&\le Dc  \|\mathbf{z}-\mathbf{w}\|_\infty,
\end{split}
\]
and therefore
\begin{equation}\label{l2 disc}
\begin{split}
& \sup_{n\in \bbN}\bigg |\sum_{m=n}^{+\infty} \cA(n,m)P^2_m(f_m(y_m+(\bbI-P^3_m)z_m)-f_m(y_m+(\bbI-P^3_m)w_m)) \bigg |_n \\
&\le Dc \|\mathbf{z}-\mathbf{w}\|_\infty. 
\end{split}
\end{equation}
Finally, 
\[
\begin{split}
& |P^3_n(f_n(y_n+(\bbI-P^3_n)z_n)-f_n(y_n+(\bbI-P^3_n)w_n))|_n \\
&=\left(\frac{\mu_{n+1}^\lambda}{\mu^\lambda _{n+1}-\mu_n^\lambda}\right) |P^3_n(f_n(y_n+(\bbI-P^3_n)z_n)-f_n(y_n+(\bbI-P^3_n)w_n))| \\
&\le \left(\frac{\mu_{n+1}^\lambda}{\mu^\lambda _{n+1}-\mu_n^\lambda}\right)(2D+1)\nu_n^d |f_n(y_n+(\bbI-P^3_n)z_n)-f_n(y_n+(\bbI-P^3_n)w_n)| \\
&\le c(2D+1)|(\bbI-P^3_n)z_n-(\bbI-P^3_n)w_n| \\
&\le c(2D+1)| z_n-w_n|_n,
\end{split}
\]
and hence
\begin{equation}\label{l3 disc}
\begin{split}
&\sup_{n\in \bbN}|P^3_n(f_n(y_n+(\bbI-P^3_n)z_n)-f_n(y_n+(\bbI-P^3_n)w_n))|_n \\
& \le c(2D+1)\|\mathbf{z}-\mathbf{w}\|_\infty. 
\end{split}
\end{equation}
Thus, combining~\eqref{l1 disc}, \eqref{l2 disc} and~\eqref{l3 disc}, we conclude that 
\begin{equation}\label{conc disc}
\|\mathcal T \mathbf{z}-\mathcal T \mathbf{w} \|_\infty \le q\|\mathbf{z} - \mathbf{w} \|_\infty.
\end{equation}
Set
\[
C:=\frac{\bar D}{1-q}, 
\]
and consider
\[
\mathcal B:=\{ \mathbf{z} \in \mathcal Y: \|\mathbf{z} \|_\infty \le C\delta \}.
\]
By arguing as in the proof of Theorem~\ref{thm1}, it follows easily from~\eqref{T0 disc} and~\eqref{conc disc} that 
 $\mathcal T\mathbf{z}\in \mathcal B$. From~\eqref{C disc} and~\eqref{conc disc}, we conclude that $\mathcal T\rvert_{\mathcal B} \colon \mathcal B\to \mathcal B$ is a contraction, and therefore it has a unique fixed point $\mathbf{z}=(z_n)_{n\in \bbN}\in \mathcal B$.  Hence, for each $n\in \bbN^\ast$ we have that 
\begin{equation}\label{fp disc}
\begin{split}
z_n &=-P^3_{n-1}(A_{n-1}y_{n-1}+f_{n-1}(y_{n-1}+(\bbI-P^3_{n-1})z_{n-1})-y_{n}) \\
&\phantom{=}+\sum_{m=0}^{n-1} \cA(n,m) P^1_m(A_my_m+f_m(y_m+(\bbI-P^3_m)z_m)-y_{m+1}) \\
&\phantom{=}-\sum_{m=n}^{+\infty} \cA(n,m) P^2_m(A_my_m+f_m(y_m+(\bbI-P^3_m)z_m)-y_{m+1}).
\end{split}
\end{equation}

Now, let us consider $\bar{\mathbf{z}}=(\bar{z}_n)_{n\in \bbN}$ given by $\bar{z}_0=0$ and
\[
\begin{split}
\bar{z}_n &=\sum_{m=0}^{n-1} \cA(n,m) P^1_m(A_my_m+f_m(y_m+(\bbI-P^3_m)z_m)-y_{m+1}) \\
&\phantom{=}-\sum_{m=n}^{+\infty} \cA(n,m) P^2_m(A_my_m+f_m(y_m+(\bbI-P^3_m)z_m)-y_{m+1})
\end{split}
\]
for every $n\in \bbN^\ast$. Observe that $\bar{z}_n=(\bbI-P^3_n)z_n$, $n\in \bbN$. Moreover, for every $n> k> 0$ we have that 
\[
\begin{split}
\bar{z}_n-\cA(n,k)\bar{z}_k &=\sum_{m=0}^{n-1} \cA(n,m) P^1_m(A_my_m+f_m(y_m+(\bbI-P^3_m)z_m)-y_{m+1}) \\
&\phantom{=}-\sum_{m=0}^{k-1} \cA(n,m) P^1_m(A_my_m+f_m(y_m+(\bbI-P^3_m)z_m)-y_{m+1}) \\
&\phantom{=}-\sum_{m=n}^{+\infty} \cA(n,m) P^2_m(A_my_m+f_m(y_m+(\bbI-P^3_m)z_m)-y_{m+1})\\
&\phantom{=}+\sum_{m=k}^{+\infty} \cA(n,m) P^2_m(A_my_m+f_m(y_m+(\bbI-P^3_m)z_m)-y_{m+1})
\end{split}
\]
and consequently 
\[
\begin{split}
\bar{z}_n-\cA(n,k)\bar{z}_k &=\sum_{m=k}^{n-1} \cA(n,m) P^1_m(A_my_m+f_m(y_m+(\bbI-P^3_m)z_m)-y_{m+1}) \\
&\phantom{=}+\sum_{m=k}^{n-1} \cA(n,m) P^2_m(A_my_m+f_m(y_m+(\bbI-P^3_m)z_m)-y_{m+1}).
\end{split}
\]
Therefore, 
\[
\bar{z}_{n+1}=A_n\bar{z}_n+\left(P^1_n+P_n^2\right)\left(A_ny_n+f_n(y_n+\bar{z}_n)-y_{n+1}\right)
\]
for every $n\in \bbN$. This easily implies that 
\begin{equation}\label{sol disc}
\begin{split}
y_{n+1}+\bar{z}_{n+1} &=A_n(y_n+\bar{z}_n)+f_n(y_n+\bar{z}_n) \\
&\phantom{=}-P^3_n(A_ny_n+f_n(y_n+\bar{z}_n)-y_{n+1}),
\end{split}
\end{equation}
for all $n\in \bbN$. We define $\mathbf{x}=(x_n)_{n\in \bbN}$ by
\[
x_n=y_n+\bar{z}_n, \quad n\in \bbN.
\]
Then, 
\[
P^3_n(x_n-y_n)=P^3_n\bar{z}_n=P^3_n(\bbI-P^3_n)z_n=0,
\]
for $n\in \bbN$. Moreover, using~\eqref{ln disc} we have that 
\[
|\bar{z}_n| =|(P^1_n+P^2_n)z_n|\le |P^1_nz_n|_n+|P^2_nz_n|_n \le |z_n|_n \le \|\mathbf{z}\|_\infty,
\]
for each $n\in \bbN$. Thus, 
\[
\sup_{n\in \bbN} |x_n-y_n| =\sup_{n\in \bbN} |\bar{z}_n| \le C\delta. 
\]
In addition, \eqref{sol disc} implies that $x_{n+1}-A_nx_n-f_n(x_n)\in \Ima P^3_n$ for all $n\in \bbN$. Finally, we have that 
\[
\begin{split}
 &|x_{n+1}-A_n x_n-f_n(x_n)| \\
&=|-P^3_n(A_ny_n+f_n(y_n+\bar{z}_n)-y_{n+1})| \\
&=|-P^3_n(A_ny_n+f_n(y_n+(\bbI-P^3_n)z_n)-y_{n+1})| \\
&=|z_n-\bar z_n| \\
&=|z_n-(\bbI-P^3_n)z_n |\\
&=|P_n^3 z_n| \\
&\le (2D+1)\nu_n^d |z_n| \\
&\le (2D+1)\nu_n^d|z_n|_n \\
&\le C\delta (2D+1)\nu_n^d,
\end{split}
\]
since 
\[
\begin{split}
|z_n| &\le |P_n^1z_n|+|P_n^2z_n|+|P_n^3z_n| \\
&\le |P_n^1z_n|_n+|P_n^2z_n|_n+\frac{\mu_{n+1}^\lambda-\mu_n^\lambda}{\mu_{n+1}^\lambda}|P_n^3z_n|_n \\
&\le |P_n^1z_n|_n+|P_n^2z_n|_n+|P_n^3z_n|_n  \\
&=|z_n|_n.
\end{split}
\]
Hence, \eqref{EST2} holds and the proof of the theorem is completed. 
\end{proof}

\begin{remark}
One can now easily  formulate and prove  the appropriate  versions of Corollaries~\ref{cor} and~\ref{corx} in the present setting.  
\end{remark}

\begin{remark}
As in the continuous time case, Theorem \ref{thm2} may be applied to several settings that we list below:
\begin{itemize}
\item the case of exponential dichotomies which corresponds to $\mu_n=e^n$, $\nu_n=1$ and $P_n^3=0$ for $n\in \mathbb N$;
\item the case of partial exponential dichotomies (see~\cite{BD2}) that corresponds to $\mu_n=e^n$ and $\nu_n=1$ for $n\in \mathbb N$;
\item the case of tempered exponential dichotomies that corresponds to $\mu_n=e^n$, $P_n^3=0$ for $n\in \mathbb N$ and with tempered $\nu$, i.e. 
\[
\lim_{n\to +\infty}\frac{1}{n}\ln \nu_n=0;
\]
\item the case of nonuniform  polynomial  dichotomies (see \cite{BS0}) which corresponds to $\mu_n=\nu_n=1+n$ and $P_n^3=0$ for $n\in \mathbb N$;
\item the case of $(\mu, \nu)$-dichotomies (see~\cite{BS2}) that corresponds to our general setting with $P_n^3=0$, $n\in \mathbb N$;
\end{itemize}
 Consequently, Theorem \ref{thm2} gives us an unified approach to the shadowing property in all these settings and, in particular, we can recover some of the results from \cite{BD0,BD2,BD1,BD3} and from references therein. On the other hand, we observe that it does not generalize all the results in \cite{BD0,BD2,BD1,BD3} because in those papers we studied a very general version of the shadowing property called $B$-Lipschitz shadowing property, where $B$ is some Banach sequence space. The size of a pseudotrajectory and its deviation from an exact trajectory is then measured via the norm on $B$. When $B=l^\infty$ is the space of all bounded sequences equipped with a supremum norm, we have the usual shadowing property as studied in present paper. However, if we take $B$ to be some other sequence space (for example $l^p$, $1\le p<\infty$), 
 we get different versions of the standard shadowing property that are not necessarily covered by Theorem \ref{thm2}. 

We end by remarking that, as in the case of continuous time, Thereom~\ref{thm2} is a first shadowing result that deals with the case when $(A_n)_n$ admits a polynomial, or more generally, a $(\mu, \nu)$-dichotomy.
\end{remark}

%%%%%%%%%%%%%%%%%%%%%%%%%%%%%%%%%%%%%%%%%%%%%%%%%%%%%%%%%
%%%%%%%%%%%%%%%%%%%%%%%%%%%%%%%%%%%%%%%%%%%%%%%%%%%%%%%%%

\medskip{\bf Acknowledgements.}
L.B. was partially supported by a CNPq-Brazil PQ fellowship under Grant No. 306484/2018-8. D.D. was supported in part by Croatian Science Foundation under the project IP-2019-04-1239 and by the University of Rijeka under the projects uniri-prirod-18-9 and uniri-pr-prirod-19-16.

%%%%%%%%%%%%%%%%%%%%%%%%%%%%%%%%%%%%%%%%%%%%%%%%%%%%%%%%%
%%%%%%%%%%%%%%%%%%%%%%%%%%%%%%%%%%%%%%%%%%%%%%%%%%%%%%%%%

\end{document}